\crefname{section}{Section}{Sections}
\crefname{figure}{Figure}{Figures}
\crefname{table}{Table}{Tables}
\crefname{equation}{}{}
\crefname{theorem}{Theorem}{Theorems}
\crefname{lemma}{Lemma}{Lemmas}
\crefname{remark}{Remark}{Remarks}
\crefname{problem}{Problem}{Problems}
\newtheorem{theorem}{Theorem}[section]
\newtheorem{example}{Example}[section]
\newtheorem{problem}{Problem}[section]
\newtheorem{lemma}{Lemma}[section]
\begin{document}
	
\title{The inverse obstacle scattering with incident tapered waves}

\author{
	Deyue Zhang\thanks{School of Mathematics, Jilin University, Changchun, China, {\it dyzhang@jlu.edu.cn}},
Mengjiao Bai\thanks{School of Mathematics, Jilin University, Changchun, China, {\it baimj24@mails.jlu.edu.cn}}, 
   	  Yan Chang\thanks{School of Mathematics, Harbin Institute of Technology, Harbin, China. {\it 21B312002@stu.hit.edu.cn}}
	     	 \ and 
	  Yukun Guo\thanks{School of Mathematics, Harbin Institute of Technology, Harbin, China. {\it ykguo@hit.edu.cn} (Corresponding author)}
}
\date{}

\maketitle

\begin{abstract}
	This paper is concerned with the reconstruction of the shape of an acoustic obstacle. Based on the use of the tapered waves with very narrow widths illuminating the obstacle, the boundary of the obstacle is reconstructed by a direct imaging algorithm. The stability of the imaging scheme is mathematically analyzed. We emphasize that different from the incident plane waves or point sources, the tapered waves with narrow widths bring several benefits in the inverse scattering: 1. local property. A tapered wave can illuminate only on a local part of the boundary of the obstacle, which generates the scattered field; 2. high resolution. We need only reconstruct the boundary near the beam, which improves the quality of some well-known algorithms; 3. fast and easy to implement. Numerical examples are included to demonstrate the effectiveness of the tapered waves.

\end{abstract}

\noindent{\it Keywords}:  inverse obstacle scattering, tapered wave, direct imaging


\section{Introduction}

The inverse scattering problems are significant in diverse applications such as radar sensing, sonar detection, and biomedical imaging (see, e.g. \cite{Colton}). In the last three decades, numerous computational attempts have been made to solve the inverse scattering problems of identifying impenetrable obstacles or penetrable medium. Typical numerical strategies developed for the inverse scattering problems include decomposition methods, iteration schemes, recursive linearization-based algorithms, and sampling approaches (see, e.g. \cite{BLLT15, Cakoni1, CK18, Colton}). We refer to \cite{BL20, CDLZ20, DCL21} for some recent studies on the unique recovery issues in inverse scattering theory. In addition, inverse scattering problems without phase information have recently received great interest. Some uniqueness results and numerical methods on the exterior inverse scattering problems with phaseless data can be found in \cite{CH17, KR17, LL15, LLW17, ZZ17}.

Usually, in archetypal inverse scattering problems, the target objects are illuminated by an incident plane wave or an incident point source. It is well known that the range of irradiation of a plane wave or a point source is wide, and a large part of the boundary of the obstacle is illuminated directly. This means the scattered data includes all the information on the illuminated part of the boundary. Therefore, it is difficult to recover some local parts of the boundary accurately.
   
To overcome this difficulty, the main idea is to choose an incident field with a narrow width, which illuminates directly only on a local part of the boundary of the obstacle. This implies that the scattered fields mainly include information on this local part of the boundary, and the accuracy of the reconstruction of this part can be improved. To this aim, we introduce the Thorsos tapered wave \cite{Thorsos}, which is given by 
\begin{align}
u^i (x)&= \mathrm{exp}\left(\mathrm{i}k(x_1 \sin \theta_i-  x_2 \cos\theta_i)\right)(1+w(x))  \mathrm{exp}\left(-\frac{(x_1+x_2\tan \theta_i )^2}{g^2}\right)\notag\\ 
&\triangleq \mathrm{e}^{\mathrm{i}k x \cdot d}  (1+w(x)) \mathrm{e}^{-\frac{(x  \cdot d^\perp)^2}{\lambda^2}},   \label{tapered wave}
\end{align}
and satisfies the non-homogeneous Helmholtz equation
\begin{align}
\Delta u^i+ k^2 u^i= k^2 F  \quad \text{in}\ \mathbb{R}^2,\label{EqHelmholtz}
\end{align}
where 
\begin{equation}\label{Fx}
F=u^{i}(x)\left\{-w^2 (x)-16\frac{(x \cdot d)^2 (x  \cdot d^{\perp})^2} {k^4\lambda^8}  +\frac{4\mathrm{i}kx \cdot d}{k^4\lambda^4}\left(1-\frac{4(x \cdot d^\perp )^2}{\lambda^2}\right)\right\},
\end{equation}
and 
\begin{equation}\label{wx}
w (x)=\frac{1}{k^2\lambda^2}  \left(\frac{2(x  \cdot d^\perp )^2}{\lambda^2}-1\right).
\end{equation}
Here $k$ is the wave number, $x=(x_1,x_2)$, $d=(d_1, d_2)=(\sin \theta_i, -\cos \theta_i)$, $d^\perp=(-d_2,d_1)$, $d_1d_2\neq 0$, $\theta_i (0<|\theta_i|<\pi, |\theta_i|\neq \frac{\pi}{2})$ is the angle of incidence (it is the angle between the direction of propagation and the negative $x_2$ axis), $\lambda=g\cos\theta_i $ and $g$ is the parameter that controls the tapering. The tapered waves with $k\lambda\gg1$ are widely used for the inverse scattering by an unbounded rough surface \cite{Thorsos, Durnin87, ZWFL16, HZR08}.

In this paper, we consider the tapered wave incidence with $\lambda\ll1$, which, together with the definition \eqref{tapered wave},  means that the tapered wave has a very narrow width. As shown in \Cref{fig: taper}, the amplitude of the tapered wave decays exponentially away from the bandwidth. 

\begin{figure}
  \centering
	\subfigure[$g=0.5$]{\includegraphics[width=0.45\linewidth]{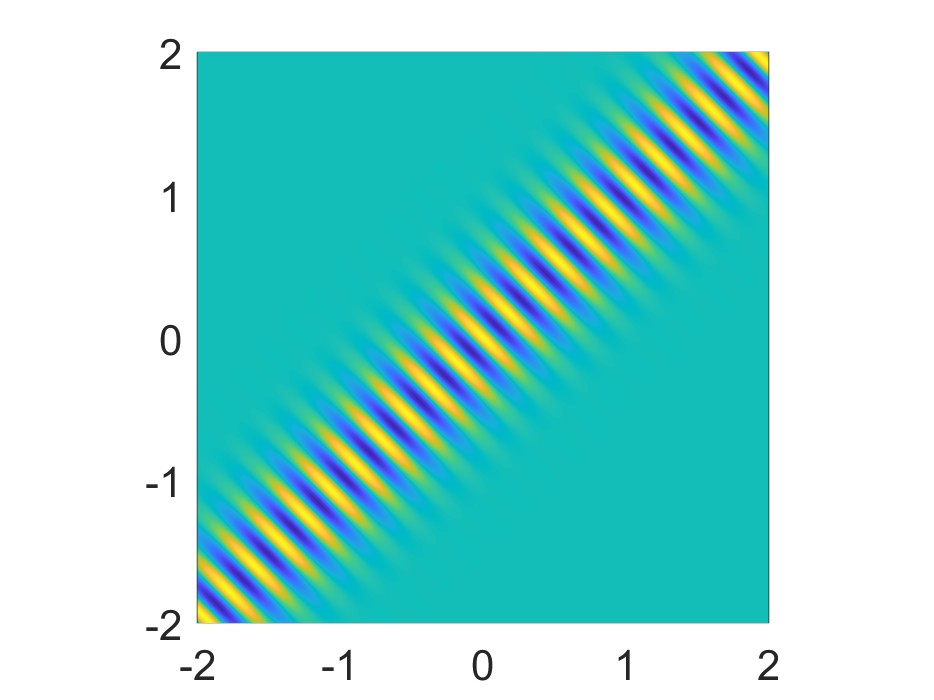}} \subfigure[$g=0.1$]{\includegraphics[width=0.45\linewidth]{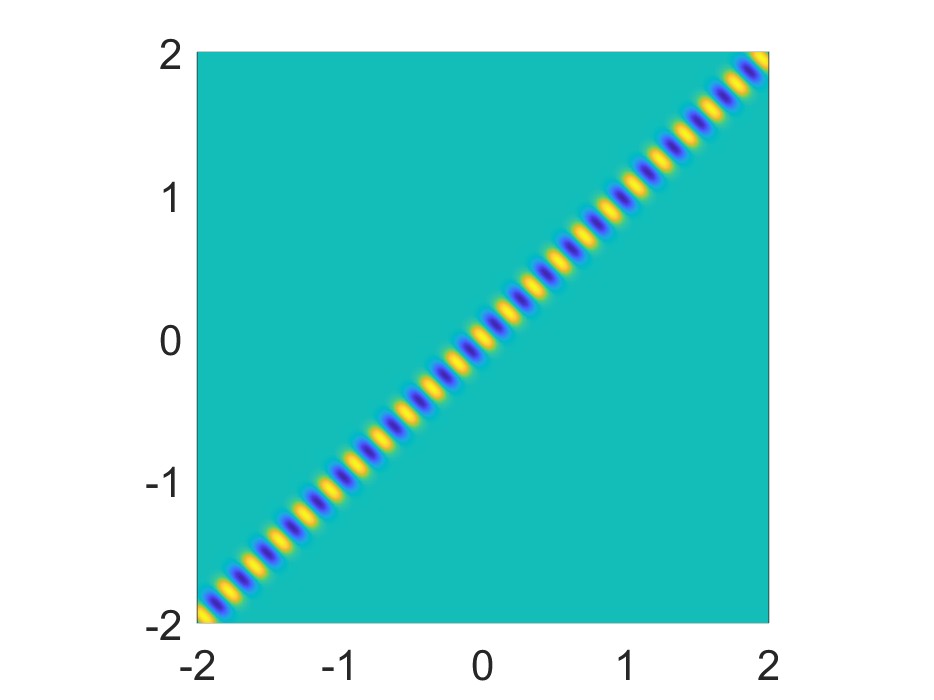}} 
  \caption{An illustration of the tapered wave $(k=25,\theta =\frac{3}{4}\pi)$.}\label{fig: taper}
\end{figure}

Based on this tapered wave incidence, we consider the inverse obstacle scattering problems. With a high wavenumber, only the boundary inside the bandwidth $\lambda$ of the beam is reconstructed by a direct imaging method. A mathematical analysis of the stability is presented to justify the theoretical foundations of the method. Our study is based on the high-frequency asymptotics, namely the Kirchhoff or the physical optics approximation. However, our numerical experiments show that the high-frequency requirement could be relaxed to a certain extent. 

The tapered waves with narrow widths have some advantages in solving inverse scattering problems. First, this illumination is concentrated in a local portion of the boundary of the scatterer, thus the induced scattered field mainly involves the information of this part of interest. Second, with a relatively high wavenumber, the confinement of the local illumination leads to the high resolution of inversion. Hence, we need only reconstruct the boundary inside the beam piecewisely, which significantly improves the quality of the existing well-known algorithms; Moreover, the combination of the tapered wave and the sampling scheme is fast and easy to implement. In particular, for a single tapered wave incidence, one only needs to reconstruct the corresponding boundary patch/points located inside the wave beam. Then all these recovered points naturally form the final reconstruction of the target shape of the obstacle.  

The rest of this paper is arranged as follows. In the next section, we introduce the inverse obstacle scattering problem and the imaging method. Then the mathematical analysis of the indicating properties is also presented. Next, numerical validations and discussions of the proposed method are illustrated in \cref{sec: example}. Finally, some concluding remarks are given in section \cref{sec: conclusion}.

\section{The inverse scattering and physical optics approximation}\label{sec: obstacle}

We begin this section with the mathematical formulations of the model scattering problem. Throughout this paper, we assume that $D\subset\mathbb{R}^2$ is an open and simply connected domain with $C^2$ boundary $\partial D$. 
The total field $u$ is the superposition of the incident field $u^i$ and the scattered field $u^s$, namely, $u=u^i+u^s$.  To characterize the physical properties of distinct scatterers, the boundary operator $\mathscr{B}$ is introduced by
\begin{align}\label{BC}
	\mathscr{B}u=
	\begin{cases}
		u, & \text{for a sound-soft obstacle},  \\
		\partial_\nu u+ \mathrm{i} k\mu u,  & \text{for an impedance obstacle},
	\end{cases}
\end{align}
where $\nu$ is the unit outward normal to $\partial D$ and $\mu$ is a real parameter. This boundary condition \cref{BC} covers the Dirichlet/sound-soft boundary condition, the Neumann/sound-hard boundary condition ($\mu=0$), and the impedance boundary condition ($\mu\neq 0$).

The obstacle scattering problem can be formulated as finding the scattered field $u^s\in H^1_{\rm loc}(\mathbb{R}^2\backslash\overline{D})$ satisfying the following boundary value problem:
\begin{align}
    \Delta u^s+ k^2 u^s= &\, 0\quad \text{in}\ \mathbb{R}^2\backslash\overline{D},\label{Oeq:Helmholtz} \\
	\mathscr{B}(u^i+u^s)= &\, 0 \quad \text{on}\ \partial D, \label{OBC} \\
	\frac{\partial u^s}{\partial r}-\mathrm{i} ku^s= &\, o\left(r^{-\frac{1}{2}}\right), \quad r=|x| \to \infty, \label{SRC}
\end{align}
where the Sommerfeld radiation condition \cref{SRC} holds uniformly in all directions $x/|x|$. The existence of a solution to the direct scattering problem \cref{Oeq:Helmholtz,OBC,SRC} is well known (see, e.g., \cite{Colton}).

With these preparations, the inverse obstacle scattering problem can be stated as follows.
\begin{problem}[Inverse obstacle scattering]\label{prob: obstacle}
	Let $D\subset B_R=\{x\in \mathbb{R}^2: |x|<R \}$ be the impenetrable obstacle with boundary condition $\mathscr{B}$ and $\mathbb{S}^1=\{x\in\mathbb{R}^2:|x|=1 \}$. Given the near-field data
	$$
		\{u^s(x; d):  x\in\partial B_R,\ d\in \mathbb{S}^1,\ d_1d_2\neq 0\},
	$$
	for a fixed wave number $k$, determine the boundary $\partial D$ of the obstacle.
\end{problem}

We refer to \cref{fig:model_obstacle} for an illustration of the geometry setting of Problem \ref{prob: obstacle}. 
\begin{figure}[htp]
	\centering
	\begin{tikzpicture}[thick]
		\pgfmathsetseed{2021}
		\draw plot [smooth cycle, samples=8, domain={1:8}, xshift=1cm, yshift=0.5cm] (\x*360/8+5*rnd:0.8cm+1cm*rnd) [fill=lightgray] node at (1.2, 0.1) {$D$};
		\draw  (1, 0.5) circle (2.5cm) node at (-0.3, 3) {$\partial B_R$}; 
		\draw [->] (-0.1, 1.1)--(-0.8, 1.7) node at (-0.6, 0.8) {$u^s$};
		\draw (0, 1.5) arc(90:180:0.5cm);
		\draw (0, 1.7) arc(90:180:0.7cm);
	    \draw [->] (1.88, 4.1)--(1.75, 3.3);
		\draw [->] (1.58, 4.1)--(1.45, 3.3) node at (2.1, 3.6) {$u^i$};
	\end{tikzpicture}
	\caption{An illustration of the inverse obstacle scattering problem.} \label{fig:model_obstacle}
\end{figure}
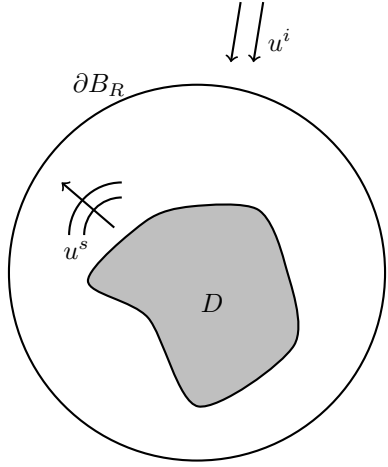

\subsection{Physical optics approximation and the indicator functions}

For a high wavenumber $k$ and a small $\lambda$, from the physical optics approximation, we assume that the tapered wave illuminates $\Gamma_d$ which is a small part of the boundary $\partial D$. Specifically, the measure of $\Gamma_d$ is represented by $|\Gamma_d|=C_0\lambda$ for some constant $C_0>1$. Let  
\begin{equation*}
\Phi(x,y)=\frac{\rm i}{4}H_0^{(1)}(k|x-y|), \quad x\neq y,
\end{equation*}
where $H_0^{(1)}$ denotes the zeroth-order Hankel function of the first kind. $\Phi$ is the fundamental solution to the Helmholtz equation. As a straightforward consequence of the Green formula in Theorem 2.4 in \cite{Colton}, we have the following lemma.
\begin{lemma}\label{Lemma2.1}
 For the scattering of a tapered wave incidence $u^i$ from an obstacle $D$, we have  
	\begin{equation}\label{Fomula1}
	u^s(x; d)=\int_{\Gamma_d}\left\{u^s(y;d)\frac{\partial \Phi(x,y)}{\partial \nu(y)}-\frac{\partial u^s}{\partial \nu}(y;d)\Phi(x,y)\right\}\mathrm{d}s(y),
    \quad x\in \mathbb{R}^2\backslash\overline{D}.	
\end{equation}
\end{lemma}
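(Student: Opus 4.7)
My starting point is exactly the result the authors cite: Theorem 2.4 in \cite{Colton}, i.e.\ Green's representation formula for radiating solutions of the Helmholtz equation. Since $u^s\in H^1_{\rm loc}(\mathbb{R}^2\setminus\overline{D})$ solves \eqref{Oeq:Helmholtz} together with the Sommerfeld radiation condition \eqref{SRC}, that theorem gives, for every $x\in\mathbb{R}^2\setminus\overline{D}$,
\begin{equation*}
u^s(x;d) \;=\; \int_{\partial D}\left\{u^s(y;d)\frac{\partial \Phi(x,y)}{\partial \nu(y)}-\frac{\partial u^s}{\partial \nu}(y;d)\Phi(x,y)\right\}\mathrm{d}s(y).
\end{equation*}
So the entire content of the lemma is the reduction of the domain of integration from the full boundary $\partial D$ to the illuminated patch $\Gamma_d$.

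To carry out that reduction I would split $\partial D=\Gamma_d\cup(\partial D\setminus\Gamma_d)$ and argue that the surface currents $u^s(\cdot;d)$ and $\partial_\nu u^s(\cdot;d)$ are negligible on the unilluminated portion $\partial D\setminus\Gamma_d$. This is where the physical optics (Kirchhoff) approximation, which the authors have explicitly invoked just before the lemma, does the work: in that regime the Cauchy data of $u^s$ on $\partial D$ are dictated pointwise by $u^i$ and $\partial_\nu u^i$. Because the tapered wave \eqref{tapered wave} carries the Gaussian factor $\exp(-(y\cdot d^\perp)^2/\lambda^2)$ with $\lambda\ll 1$, and because by definition $|\Gamma_d|=C_0\lambda$ with $C_0>1$ so that $|y\cdot d^\perp|\gtrsim \lambda$ on $\partial D\setminus\Gamma_d$, the amplitude of $u^i$ (and of its derivatives, up to polynomial prefactors in $1/\lambda$) is super-exponentially small there. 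Hence the Kirchhoff surface currents are also super-exponentially small on $\partial D\setminus\Gamma_d$, and the contribution of that part to the boundary integral is absorbed into the Kirchhoff error already in force.

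The only subtle point, and the one I would flag as the main obstacle, is the rigor of this truncation: the identity \eqref{Fomula1} is not an exact equality but rather an asymptotic one, valid in the high-wavenumber regime underlying physical optics. In a cleaner exposition I would therefore state the lemma as an asymptotic identity, with an error term controlled by the tail of the Gaussian times the $L^\infty$ norms of $\Phi(x,\cdot)$ and $\partial_\nu\Phi(x,\cdot)$ on $\partial D$, both of which are bounded for $x\in\partial B_R$ away from $\partial D$. Everything else in the proof is a direct application of Theorem 2.4 in \cite{Colton} and so is routine.
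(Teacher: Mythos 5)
Your route is exactly the one the paper intends: the paper offers no proof of Lemma \ref{Lemma2.1} beyond the phrase ``straightforward consequence of the Green formula in Theorem 2.4 of \cite{Colton}'' together with the preceding physical-optics assumption that the tapered wave illuminates only $\Gamma_d$. So starting from the exact representation over $\partial D$ and truncating to $\Gamma_d$ is the intended argument, and your observation that \eqref{Fomula1} can only be an asymptotic identity (with an error absorbed into the Kirchhoff approximation) is a correct and welcome sharpening of what the paper leaves implicit.

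There is, however, one concrete gap in your truncation step. The claim that $|y\cdot d^\perp|\gtrsim\lambda$ on $\partial D\setminus\Gamma_d$ cannot hold for a closed curve: the beam axis $\{y:y\cdot d^\perp=0\}$ generically crosses $\partial D$ in at least two arcs (an entrance and an exit), each of length of order $\lambda$, and a single connected $\Gamma_d$ with $|\Gamma_d|=C_0\lambda$ covers only one of them. On the second (shadowed) arc the Gaussian factor $\exp(-(y\cdot d^\perp)^2/\lambda^2)$ is $\mathcal{O}(1)$, and the physical-optics Cauchy data there are not small: for a sound-soft obstacle $u^s=-u^i$ exactly on $\partial D$, and the Kirchhoff prescription $\partial_\nu u=0$ in the shadow gives $\partial_\nu u^s=-\partial_\nu u^i$, both of which are $\mathcal{O}(1)$ (up to powers of $1/\lambda$) on that arc. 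Its contribution to the boundary integral is therefore of the same order $\lambda$ as the contribution of $\Gamma_d$ and cannot be discarded as a Gaussian tail. The fix is to take $\Gamma_d$ to be the union of all arcs of $\partial D$ lying inside the beam strip; this is in fact consistent with the paper's own numerics, where $M=2$ peaks appear at both intersection points of the beam with the boundary, and with \eqref{Express}, which should then carry one monopole--dipole pair per arc. With that modification of $\Gamma_d$, your super-exponential tail estimate on the remaining part of $\partial D$ is correct and the argument closes.
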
	
	
From \cref{Lemma2.1}, $|\Gamma_d|=C_0\lambda$, $\lambda\ll 1$ and the numerical integration, we find 
\begin{equation}\label{Express}
	u^s(x; d)=\left(C_1\Phi(x, y_d)+C_2\partial_{\nu_y} \Phi(x, y_d)\right)\lambda+\mathcal{O}\left(\lambda^2\right), \quad \forall x\in \partial B_R,
\end{equation}
where $y_d\in \Gamma_d$ or $y_d\in \mathbb{R}^2\backslash\overline{D}$ with $\sup\limits_{x\in \Gamma_d}|y_d-x|\ll1$, and $C_1, C_2$ are some constants. 

From \eqref{Express}, we only need to find $y_d$ to determine the location of the boundary $\Gamma_d$. First, the following uniqueness result holds. 
\begin{theorem}\label{Thm2.1}
	Let $y_1, y_2 \in B_R\backslash D$, and $a_1, a_2, b_1, b_2$ be constants. Assume that
	\begin{equation}\label{unique}
	a_1\Phi(x, y_1)+b_1\partial_{\nu_y} \Phi(x, y_1)=  a_2\Phi(x, y_2)+b_2\partial_{\nu_y} \Phi(x, y_2), \quad \forall x\in \partial B_R.
	\end{equation}
	Then we have $y_1=y_2$, $a_1=a_2$ and $b_1=b_2$.
\end{theorem}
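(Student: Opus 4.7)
My plan is to combine three classical ingredients: Rellich's uniqueness for radiating exterior solutions, unique continuation for the Helmholtz operator, and a local singularity comparison near $y_1$ and $y_2$. First, I would package each side of \eqref{unique} as a function
\begin{equation*}
u_j(x) \,:=\, a_j \Phi(x, y_j) + b_j \partial_{\nu_y} \Phi(x, y_j), \qquad j=1,2,
\end{equation*}
which is a radiating solution of $(\Delta + k^2) u_j = 0$ in $\mathbb{R}^2 \setminus \{y_j\}$ (the derivative $\partial_{\nu_y} \Phi(\cdot, y)$ inherits the Sommerfeld radiation condition from $\Phi(\cdot, y)$). The hypothesis then reads $u_1 = u_2$ on $\partial B_R$.

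The difference $v := u_1 - u_2$ is a radiating Helmholtz solution in $\mathbb{R}^2 \setminus \overline{B_R}$ whose Dirichlet trace on $\partial B_R$ vanishes, so Rellich's lemma (equivalently, uniqueness for the exterior Dirichlet problem) gives $v \equiv 0$ in $\mathbb{R}^2 \setminus \overline{B_R}$. Since $\mathbb{R}^2 \setminus \{y_1, y_2\}$ is connected and $v$ solves the Helmholtz equation there, the unique continuation principle propagates $v \equiv 0$ to all of $\mathbb{R}^2 \setminus \{y_1, y_2\}$. Thus the identity in \eqref{unique} actually holds pointwise at every $x \neq y_1, y_2$, not only on $\partial B_R$.

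To pin down the source locations and the coefficients, I would exploit the small-argument Hankel asymptotics,
\begin{equation*}
\Phi(x, y) \sim -\frac{1}{2\pi}\log|x-y|, \qquad \partial_{\nu_y}\Phi(x, y) \sim \frac{1}{2\pi}\,\frac{\nu(y)\cdot(x-y)}{|x-y|^{2}} \qquad (x \to y),
\end{equation*}
so $u_j$ carries a logarithmic and an inverse-distance singularity of distinct orders at $y_j$. Suppose, for contradiction, that $y_1 \neq y_2$. Then $u_2$ is real-analytic at $y_1$, while the identity forces $u_1$ to be bounded there; approaching $y_1$ along a ray with $\nu(y_1)\cdot(x-y_1) \neq 0$ makes the $1/|x-y_1|$ term dominate and kills $b_1$, and the residual $\log$ term then kills $a_1$. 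By symmetry $a_2 = b_2 = 0$, i.e.\ the trivial case, which we exclude. Hence $y_1 = y_2 =: y_0$, and the very same singularity comparison applied to $(a_1 - a_2)\Phi(\cdot, y_0) + (b_1 - b_2)\partial_{\nu_y}\Phi(\cdot, y_0) \equiv 0$ yields $b_1 = b_2$ and then $a_1 = a_2$.

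The step I expect to demand the most care is this singularity comparison. I must choose the direction of approach so that $\nu(y)\cdot(x-y)/|x-y|$ stays bounded away from zero (otherwise the dipole term loses its leading $1/r$ behaviour), confirm that the monopole and dipole profiles are genuinely linearly independent in that regime, and track the Hankel asymptotics with enough precision to separate $\log|x-y|$ from $1/|x-y|$. The other two ingredients, Rellich's lemma and unique continuation, are entirely standard and merely provide the machinery that reduces the problem to this local comparison near $y_0$.
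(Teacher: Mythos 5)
Your proposal is correct. The first half --- packaging the two sides as radiating point-source fields, applying Rellich's lemma / exterior uniqueness on $\partial B_R$, continuing the identity to $\mathbb{R}^2\setminus\{y_1,y_2\}$ by analyticity, and then forcing $y_1=y_2$ by comparing boundedness against the singularity --- is essentially the paper's argument; in fact you are slightly more careful than the paper, which simply declares a ``contradiction'' as $x\to y_2$ and thereby tacitly assumes $(a_2,b_2)\neq(0,0)$, whereas you correctly observe that the degenerate all-zero case must be excluded for the conclusion $y_1=y_2$ to hold at all. Where you genuinely diverge is the second half: to separate the coefficients once $y_1=y_2$, the paper passes to the far-field patterns of $\Phi(\cdot,y_1)$ and $\partial_{\nu_y}\Phi(\cdot,y_1)$ and reads off $a_1-a_2=\mathrm{i}k(b_1-b_2)\,\nu(y_1)\cdot\hat{x}$ for all $\hat{x}\in\mathbb{S}^1$, while you instead perform a local small-argument comparison of the monopole ($\log|x-y_0|$) and dipole ($|x-y_0|^{-1}$) singularities at the common source point. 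Both routes are valid: the far-field route avoids any delicacy about directions of approach and orders of the Hankel expansion, while your near-field route is self-contained (it reuses the same singularity machinery from the first step and needs no far-field representation) and makes transparent exactly which linear independence is being exploited; the one point you rightly flag --- choosing the approach direction so that $\nu(y_0)\cdot(x-y_0)/|x-y_0|$ stays bounded away from zero --- is easily arranged and does not threaten the argument.
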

\begin{proof}
First, we claim that $y_1=y_2$. Otherwise, if $y_1\neq y_2$, let
\begin{equation*} 
	v_1(x)=a_1\Phi(x,y_1)+b_1\partial_{\nu_y} \Phi(x,y_1),\quad   	v_2(x)=a_2\Phi(x,y_2)+b_2\partial_{\nu_y} \Phi(x,y_2), \quad x\notin \{y_1, y_2\}.
\end{equation*}	
Then from \eqref{unique}, we have $v_1=v_2$ on $\partial B_R$, which, together with the uniqueness of the scattering problem in $\mathbb{R}^2\backslash \overline{B_R}$, yields $v_1=v_2$ in $\mathbb{R}^2\backslash \overline{B_R}$. Further, by using the analyticity of
 $v_1$ and $v_2$, we deduce 	
\begin{equation*} 
	v_1(x)=v_2(x), \quad \forall x\in \mathbb{R}^2\backslash\{y_1, y_2\} .
\end{equation*}		
By letting $x\rightarrow y_2$ and using the boundedness of $v_1(x)$, we have that $v_1(x)$ is bounded and $v_2(x)$ tends to infinity, which is a contradiction. Hence $y_1=y_2$.	
	
Since $y_1=y_2$, let
	\begin{equation*} 
		w_1(x)=(a_1-a_2)\Phi(x,y_1),\quad   	w_2(x)=(b_2-b_1)\partial_{\nu_y} \Phi(x,y_1), \quad x\neq y_1.
	\end{equation*}	
Then from \eqref{unique}, we have $w_1=w_2$ on $\partial B_R$. Again, by using the uniqueness of the scattering problem in $\mathbb{R}^2\backslash \overline{B_R}$, we have $w_1=w_2$  in $\mathbb{R}^2\backslash \overline{B_R}$, and thus 
\begin{equation*} 
	w_1^\infty(\hat{x})=w_2^\infty(\hat{x}),\quad\forall \hat{x}\in \mathbb{S}^1,
\end{equation*}	
where $w_1^\infty$ and $w_2^\infty$ are the far-field patterns of $w_1$ and $w_2$, respectively.
Further, from the  far-field patterns of $\Phi(x,y_1)$ and $\partial_{\nu_y} \Phi(x, y_1)$, it can be seen that
	\begin{equation*} 
	(a_1-a_2)\frac{{\rm e}^{{\rm i}\frac{\pi}{4}}}{\sqrt{8\pi k}} 
	{\rm e}^{-{\rm i}k \hat{x}\cdot y_1}=-{\rm i}k(b_2-b_1)\nu(y_1)\cdot \hat{x}\frac{{\rm e}^{{\rm i}\frac{\pi}{4}}}{\sqrt{8\pi k}} 
	{\rm e}^{-{\rm i}k \hat{x}\cdot y_1}, \quad\forall \hat{x}\in \mathbb{S}^1,
\end{equation*}	
which implies
\begin{equation*} 
a_1-a_2={\rm i}k(b_1-b_2)\nu(y_1)\cdot \hat{x}, \quad\forall \hat{x}\in \mathbb{S}^1.
\end{equation*}	
This means $a_1=a_2$ and $b_1=b_2$. The proof is completed.
\end{proof}

Consider the measured noisy data $u^s_\delta( \cdot; d)\in L^2(\partial B_R)$ satisfying $\|u^s( \cdot; d)-u^s_\delta( \cdot; d)\|_{ L^2(\partial B_R)}\leq \delta \|u^s( \cdot; d)\|_{ L^2(\partial B_R)}$ with $0<\delta<1$.

To determine $y_d$, we introduce the following indicator function
\begin{equation}\label{Indicator}
I(z; d)= \left|\frac{1}{\lambda}\left\langle u^s_\delta( \cdot; d), {\rm e}^{{\rm i}\left(k|\cdot-z|+\frac{\pi}{4}\right)}\right\rangle_{L^2(\partial B_R)}\right|, 
\end{equation}
where $\langle\cdot,\cdot \rangle$ denotes the $L^2$-inner product. We
take the first $M$ maximum points of the indicator function $I(z;d)$ as the reconstruction of the boundary $\Gamma_d$.

The following indicating properties hold for $I(z; d)$.
\begin{theorem}\label{Thm2.2}
	For $I(z;d)$, we have
\begin{align} \nonumber
I(z; d)&= \left| \frac{C_1}{4}\int_{\partial B_R} \sqrt{\frac{2}{\pi k|x-y_d|}}
{\rm e}^{{\rm i}k\left(|x-y_d|-|x-z|\right)}\mathrm{d}s(x)\right.
\\ \label{property}
&\quad \left. +\frac{C_2{\rm i }k}{4}\int_{\partial B_R} \sqrt{\frac{2}{\pi k|x-y_d|}}\partial_{\nu_y}|x-y_d|
{\rm e}^{{\rm i}\left(k|x-y_d|-k|x-z|\right)}\mathrm{d}s(x)\right|+\mathcal{O}\left(\delta^2 +\lambda+R^{-\frac{1}{2}}\right).
\end{align}
\end{theorem}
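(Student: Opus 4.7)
The plan is to carry out three successive reductions starting from the definition of $I(z;d)$: (i) replace the noisy data $u^s_\delta(\cdot;d)$ by the exact scattered field $u^s(\cdot;d)$ using the noise hypothesis, (ii) substitute the near-field expansion \eqref{Express} so that the prefactor $1/\lambda$ cancels the leading $\lambda$, and (iii) replace the Hankel function $\Phi(\cdot,y_d)$ and its normal derivative $\partial_{\nu_y}\Phi(\cdot,y_d)$ by their large-argument asymptotic forms on $\partial B_R$.

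For step (i), Cauchy--Schwarz applied to $\|u^s_\delta-u^s\|_{L^2(\partial B_R)}\le\delta\|u^s\|_{L^2(\partial B_R)}$ bounds the noise-induced error after tracking the scaling $\|u^s\|_{L^2(\partial B_R)}=\mathcal{O}(\lambda)$ that follows from \eqref{Express}; this contribution can then be absorbed into the $\delta$-dependent part of the error term. For step (ii), dividing the $\mathcal{O}(\lambda^2)$ remainder in \eqref{Express} by $\lambda$ leaves an $\mathcal{O}(\lambda)$ residue after integration against the unit-modulus test function, which is absorbed into the second error term.

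For step (iii), I would invoke the Hankel asymptotic
\begin{equation*}
H_0^{(1)}(t)=\sqrt{\tfrac{2}{\pi t}}\,e^{i(t-\pi/4)}+\mathcal{O}(t^{-3/2}),\qquad t\to\infty,
\end{equation*}
together with the identity $\tfrac{i}{4}e^{-i\pi/4}=\tfrac{1}{4}e^{i\pi/4}$, to obtain
\begin{equation*}
\Phi(x,y_d)=\tfrac{1}{4}\sqrt{\tfrac{2}{\pi k|x-y_d|}}\,e^{i(k|x-y_d|+\pi/4)}+\mathcal{O}((kR)^{-3/2})
\end{equation*}
uniformly for $x\in\partial B_R$, $y_d\in B_R\setminus D$. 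Pairing this against the complex conjugate $e^{-i(k|x-z|+\pi/4)}$ of the test function cancels the $e^{\pm i\pi/4}$ phase factors and produces the first integral in \eqref{property}. An analogous calculation for $\partial_{\nu_y}\Phi(x,y_d)$, in which the dominant contribution arises from differentiating the oscillatory phase $e^{ik|x-y_d|}$ to yield the prefactor $ik\,\partial_{\nu_y}|x-y_d|$, gives the second integral; differentiating the algebraic prefactor $|x-y_d|^{-1/2}$ or the Hankel remainder contributes only to the $\mathcal{O}(R^{-1/2})$ error.

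The principal obstacle is bookkeeping rather than any genuine analytic difficulty: one must verify uniformity in $z$ (inside the bounded search region) of each of the three error sources, and in particular show that the pointwise Hankel remainder $\mathcal{O}((kR)^{-3/2})$, integrated against a unit-modulus test function over $\partial B_R$ of length $2\pi R$, contributes exactly $\mathcal{O}(R^{-1/2})$, and that this order is preserved under $\partial_{\nu_y}$-differentiation (since the finite factor $ik\,\partial_{\nu_y}|x-y_d|$ coming from the phase does not inflate the algebraic decay). Once these uniform estimates are in hand, the three errors aggregate into the claimed $\mathcal{O}(\delta^2+\lambda+R^{-1/2})$ bound and \eqref{property} follows from the triangle inequality applied inside the modulus.
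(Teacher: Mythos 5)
Your proposal follows essentially the same route as the paper's proof: split off the noise contribution using $\|u^s(\cdot;d)-u^s_\delta(\cdot;d)\|_{L^2(\partial B_R)}\le\delta\|u^s(\cdot;d)\|_{L^2(\partial B_R)}$, insert the expansion \eqref{Express} so that the $1/\lambda$ prefactor cancels the leading order and leaves an $\mathcal{O}(\lambda)$ residue, and replace $\Phi(\cdot,y_d)$ and $\partial_{\nu_y}\Phi(\cdot,y_d)$ by their large-argument Hankel asymptotics to produce the two integrals with an $\mathcal{O}(R^{-1/2})$ remainder. The only minor point worth flagging is that your noise estimate naturally yields an $\mathcal{O}(\delta)$ term (the paper records it as $\mathcal{O}(\delta^2)$ without further justification), but this does not change the structure of the argument, which otherwise matches the paper's step for step.
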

\begin{proof}
We first recall the following asymptotic behavior of the Hankel functions \cite[(3.105)]{Colton}
\begin{equation*} 
H_n^{(1)}(t) =  \sqrt{\frac{2}{\pi t}}{\rm e}^{{\rm i}\left(t-\frac{n\pi}{2}-\frac{\pi}{4}\right)}\left\{ 1+\mathcal{O}\left(\frac{1}{t}\right)\right\}, \quad t \rightarrow \infty.
\end{equation*}
Then, we have
\begin{align*} 
\left\langle \Phi(x,y_d), {\rm e}^{{\rm i}\left(k|\cdot-z|+\frac{\pi}{4}\right)}\right\rangle_{L^2(\partial B_R)}
&= \int_{\partial B_R} \frac{\rm i}{4}H_0^{(1)}(k|x-y_d|)
{\rm e}^{-{\rm i}\left(k|x-z|+\frac{\pi}{4}\right)} \mathrm{d}s(x) \\
&=  \frac{\rm 1}{4}\int_{\partial B_R} \sqrt{\frac{2}{\pi k|x-y_d|}}
{\rm e}^{{\rm i}k\left(|x-y_d|-|x-z|\right)}\mathrm{d}s(x)
+\mathcal{O}\left(R^{-\frac{1}{2}}\right),
\end{align*}
and
\begin{align*} 
&\left\langle \partial_{\nu} \Phi(x,y_d), {\rm e}^{{\rm i}\left(k|\cdot-z|+\frac{\pi}{4}\right)}\right\rangle_{L^2(\partial B_R)}
\\
=& -\int_{\partial B_R} \frac{{\rm i }k}{4}H_1^{(1)}(k|x-y_d|)
\partial_{\nu_y}|x-y_d|
{\rm e}^{-{\rm i}\left(k|x-z|+\frac{\pi}{4}\right)} \mathrm{d}s(x)
\\
=& \frac{{\rm i }k}{4}\int_{\partial B_R} \sqrt{\frac{2}{\pi k|x-y_d|}}\partial_{\nu_y}|x-y_d|
{\rm e}^{{\rm i}\left(k|x-y_d|-k|x-z|\right)}\mathrm{d}s(x)
+\mathcal{O}\left(R^{-\frac{1}{2}}\right).
\end{align*}
Further, from \eqref{Express},  we deduce that
\begin{align*} 
	I(z; d)&= \left|\frac{1}{\lambda}\left\langle u^s( \cdot; d), {\rm e}^{{\rm i}\left(k|\cdot-z|+\frac{\pi}{4}\right)}\right\rangle_{L^2(\partial B_R)}\right|+\left|\frac{1}{\lambda}\left\langle u^s( \cdot; d)-u^s_\delta( \cdot; d), {\rm e}^{{\rm i}\left(k|\cdot-z|+\frac{\pi}{4}\right)}\right\rangle_{L^2(\partial B_R)}\right| \\
	& = \frac{\delta}{\lambda}\|u^s( \cdot; d)\|_{L^2(\partial B_R)}+
	\left|\left\langle C_1\Phi(x, y_d)+C_2\partial_{\nu_y} \Phi(x, y_d),
	 {\rm e}^{{\rm i}\left(k|\cdot-z|+\frac{\pi}{4}\right)}\right\rangle_{L^2(\partial B_R)}\right|+\mathcal{O}(\lambda) \\
	&= \left|\left\langle 
	 C_1\Phi(x,y_d)+C_2\partial_{\nu_y} \Phi(x,y_d),
	 {\rm e}^{{\rm i}\left(k|\cdot-z|+\frac{\pi}{4}\right)}\right\rangle_{L^2(\partial B_R)}\right|+\mathcal{O}(\delta^2+\lambda)
	 \\
	 &= \left| \frac{C_1}{4}\int_{\partial B_R} \sqrt{\frac{2}{\pi k|x-y_d|}}
	 {\rm e}^{{\rm i}k\left(|x-y_d|-|x-z|\right)}\mathrm{d}s(x)\right.
	 \\
	 	&\quad \left. +\frac{C_2{\rm i }k}{4}\int_{\partial B_R} \sqrt{\frac{2}{\pi k|x-y_d|}}\partial_{\nu_y}|x-y_d|
	 	 {\rm e}^{{\rm i}\left(k|x-y_d|-k|x-z|\right)}\mathrm{d}s(x)\right|+\mathcal{O}\left(\delta^2 +\lambda+R^{-\frac{1}{2}}\right),
\end{align*}
which completes the proof.
\end{proof}

In virtue of the above theorem, we see that the indicator function $I(z; d)$ should decay as the sampling point $z$ recedes from the point $y_d$. In particular, for small $\delta, \lambda$ and large $R$, function $I(z;d)$ takes the local maximum value at $z=y_d$
\begin{align*} 
I(z;d)
\approx  \left| \frac{C_1}{4}\int_{\partial B_R} \sqrt{\frac{2}{\pi k|x-y_d|}}
\mathrm{d}s(x) +\frac{C_2{\rm i }k}{4}\int_{\partial B_R} \sqrt{\frac{2}{\pi k|x-y_d|}}\partial_{\nu_y}|x-y_d|
\mathrm{d}s(x)\right|.
\end{align*}
This indicating behavior will be numerically tested by the experiments in the next section.

We end this section with a brief description of our algorithm for the direct imaging scheme for reconstructing the shape of the scatterer.
\begin{table}[htp]
	\centering
	\begin{tabular}{cp{.8\textwidth}}
		\toprule
		\multicolumn{2}{l}{{\bf Algorithm:}\quad Reconstruction by the tapered wave incidence and the direct imaging scheme} \\
		\midrule
		{\bf Step 1} & Given a frequency $k$ and an incident direction $d\in  \mathbb{S}^1$, collect the corresponding noisy near-field data due to the tapered wave incidence and the obstacle. \\
		{\bf Step 2} & Select a suitable elongated imaging mesh  $\mathcal{T}$ covering the tapered wave incidence and some local part of the scatterer. For each imaging point $z\in \mathcal{T}$, evaluate the indicator function $I(z;d)$.\\
		{\bf Step 3} & The local boundary of the scatterer in $\mathcal{T}$ can be recovered as the first $M$ maximum points of the indicator function $I(z;d)$. \\
		{\bf Step 4} & Choose a new incident direction $d\in  \mathbb{S}^1$ and repeat steps 1-3, until all the finite directions are used up. \\
		{\bf Step 5} & Plot the locations of all the above maximum points. This output is the numerical reconstruction of the boundary of the scatterer. \\
		\bottomrule
	\end{tabular}
\end{table}

\section{Numerical examples}\label{sec: example}
In this section, we shall provide several numerical examples to illustrate the performance of the proposed method.

We use the boundary integral equation method \cite{CK18} to obtain the synthetic data for the inverse problem. Random noise is then added to the measured data to test the method's stability. The  noisy data is given by 
$$
u^{s,\delta}=u^s+\delta r_1|u^s|\mathrm{e}^{\mathrm{i}\pi r_2},
$$
where $r_1,r_2$ are two uniformly distributed random numbers ranging from $-1$ to 1, and $\delta>0$ is the noise level (without otherwise specified, $\delta=5\%$ is used).
In the following numerical examples, we take the incident directions to be $d_j=(\cos2j\pi/N_d,\sin2j\pi/N_d),\,j=1,2,\cdots, N_d$ with $N_d\in\mathbb{N}_+$ being the number of incident directions. To record the scattered data on the measurement curve, the receivers are chosen to be $x_i=5(\cos2i\pi/N_R,\sin2i\pi/N_R),$ and $N_R\in\mathbb{N}_+$ is the number of the measurement points. The sampling domain is chosen as $\Omega=[-2,2]\times[-2,2]$ with $150\times 150$ equally distributed sampling points.
 
It deserves noting that, our method produces a special imaging result due to the incident tapered waves, which will be illustrated in the later examples. As analyzed in the previous sections, for each $d$, the function $I(z; d)$ takes the local maximum value at $z=y_d\in\Gamma_d$ or $y_d\in \mathbb{R}^2\backslash\overline{D}$ with $\sup\limits_{x\in \Gamma_d}|y_d-x|\ll1$.
This property motivates us to take the first $M$ local maximizers of the indicator function $I(z;d_j)$ for each incident direction $d_j,\,j=1,\cdots N_d,$ to characterize the boundary of the obstacle. We mark these local maximizers with blue points, meanwhile, the exact boundary is plotted by the red curve for comparison. As an initial attempt to test the method, we only consider the sound-soft obstacles in this article.

\begin{example}
  In the first example, we consider the reconstruction of an obstacle of a circle shape, whose boundary can be parameterized by 
  $$
  x(t)=(\cos,\sin t),\quad t\in[0,2\pi).
  $$
  
  We first consider the reconstruction with $N_d=N_R=512$. By taking $k=25$, we exhibit the reconstruction in \Cref{fig: circle}. In \Cref{fig: circle}(b)--\Cref{fig: circle}(f), we show the reconstructions by taking $g=10^{-5}, 10^{-3}, 10^{-2}, 5\times10^{-2},$ and $2\times10^{-1}$, respectively. We can see from  \Cref{fig: circle}(b)--\Cref{fig: circle}(g) that the reconstructions match well with the exact scatterer when we utilize the incident tapered waves with small bandwidth. We want to point out that, the reconstructions in \Cref{fig: circle} are formulated pointwisely. We also observe that with a relatively large $g$ (for instance, $g=0.2$ in \Cref{fig: circle}(f)), the reconstruction may be less satisfactory. Therefore, a sufficiently small $g$ is usually indispensable for an accurate reconstruction.
  
  To further illustrate the intermediate procedure of the reconstruction, we list more details for the case $g=0.01$ in \Cref{fig: circle_illustrate}. We plot the incident field and the indicator imaging with $M=2$ corresponding to the $j=15,100,237$, and 430, respectively,  in \Cref{fig: circle_illustrate}. The incident fields in the sampling domains are depicted in the first column of \Cref{fig: circle_illustrate}. As one can easily see, the incident wave propagates in a certain direction with a narrow bandwidth. The second column shows the surface plots of the indicator function $I(z; d_j)$, $j=15,100,237,430$, respectively. Since $M=2$ is chosen, we can easily see two significant local peaks in each figure in the second column. In addition, we observe that the two local maximum points are located almost at the intersection points of the propagation direction and the scatterer. By locating the corresponding local maximum points in the third column, we mark these maximizers with blue points in the third column. As seen in the third column, the local extreme points are located exactly at the boundary of the obstacle. For each $j=1,\cdots, N_d,$ we plot the local extreme points pointwisely in \Cref{fig: circle}(b). When $N_d$ is relatively large ($N_d=512$ in this example), these separately distributed points could capture the boundary profile accurately. This illustrates that a highly accurate reconstruction is achieved by the proposed method.
Furthermore, we notice from \Cref{fig: circle}(f) that the value of $g$ is closely related to the reconstruction. If the parameter $g$ is not chosen properly, the proposed method may fail to produce a satisfactory recovery.
 
\end{example}

\begin{figure}
	\centering
	\subfigure[model setup]{\includegraphics[width=0.3\linewidth]{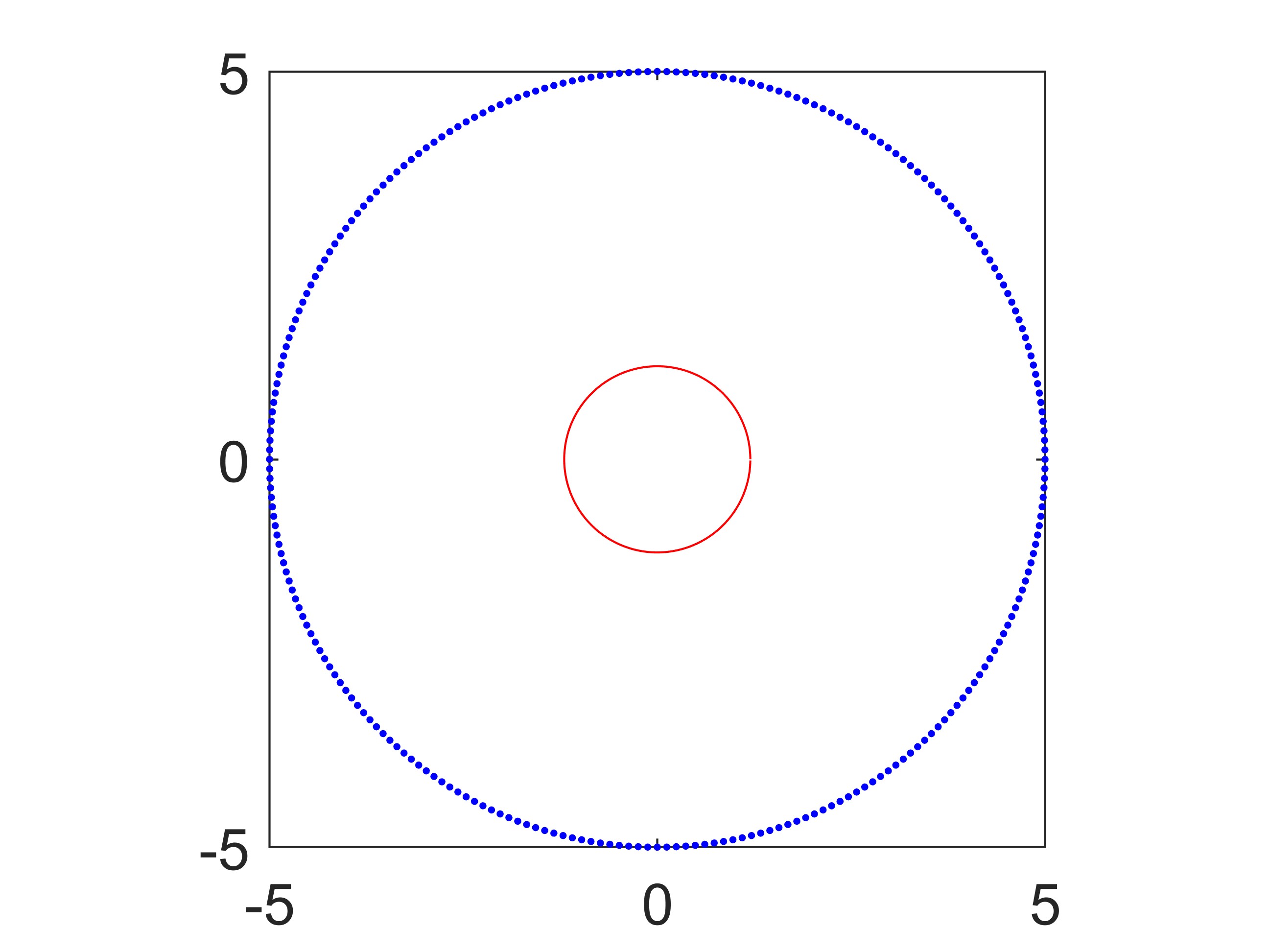}}
	\subfigure[$g=10^{-5}$]{\includegraphics[width=0.3\linewidth]{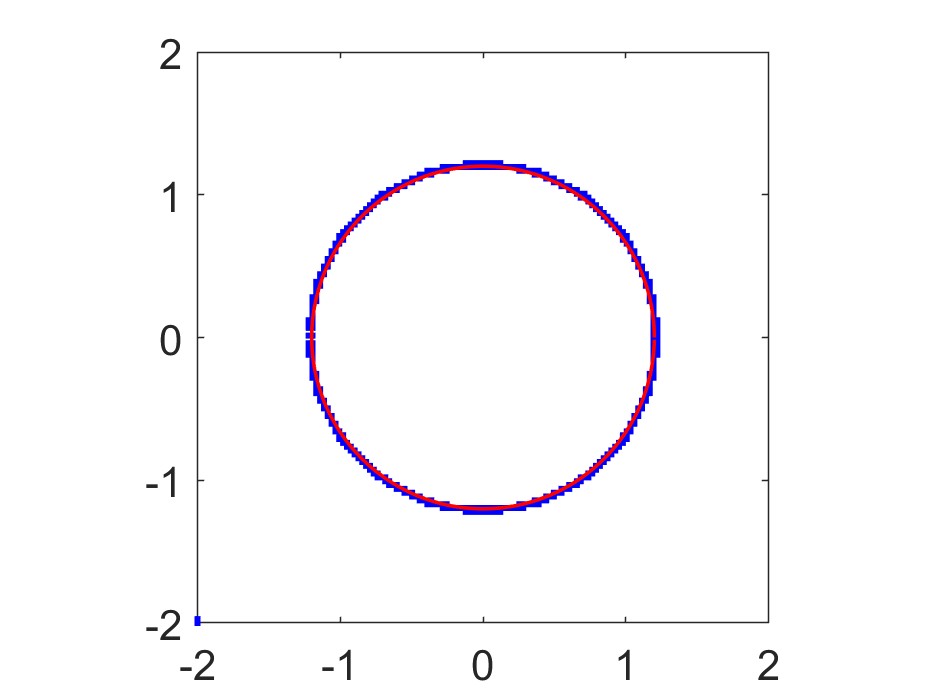}}
	\subfigure[$g=10^{-3}$]{\includegraphics[width=0.3\linewidth]{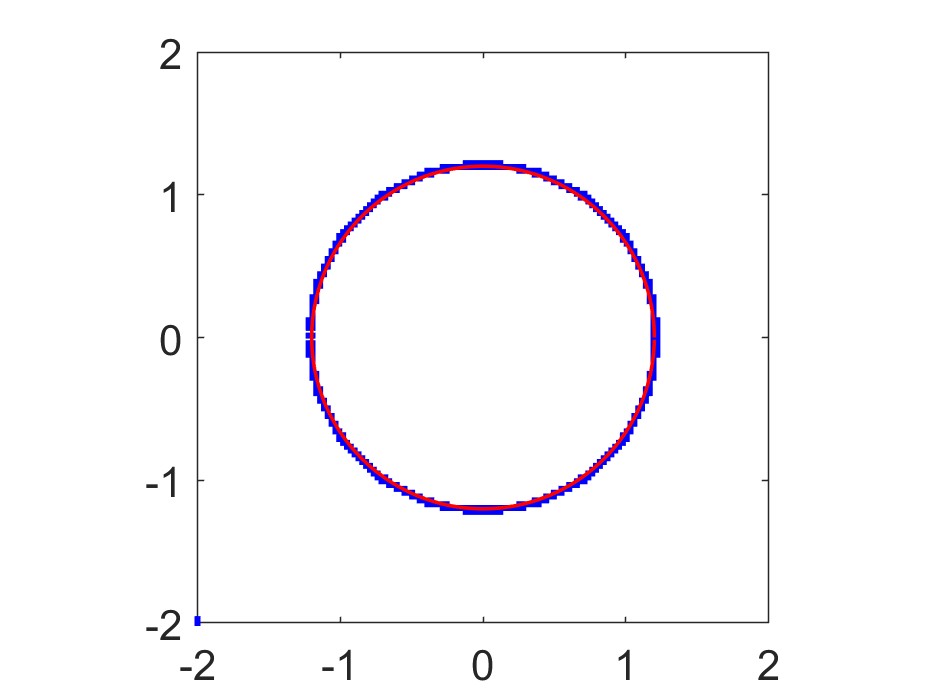}}
	\subfigure[$g=10^{-2}$]{\includegraphics[width=0.3\linewidth]{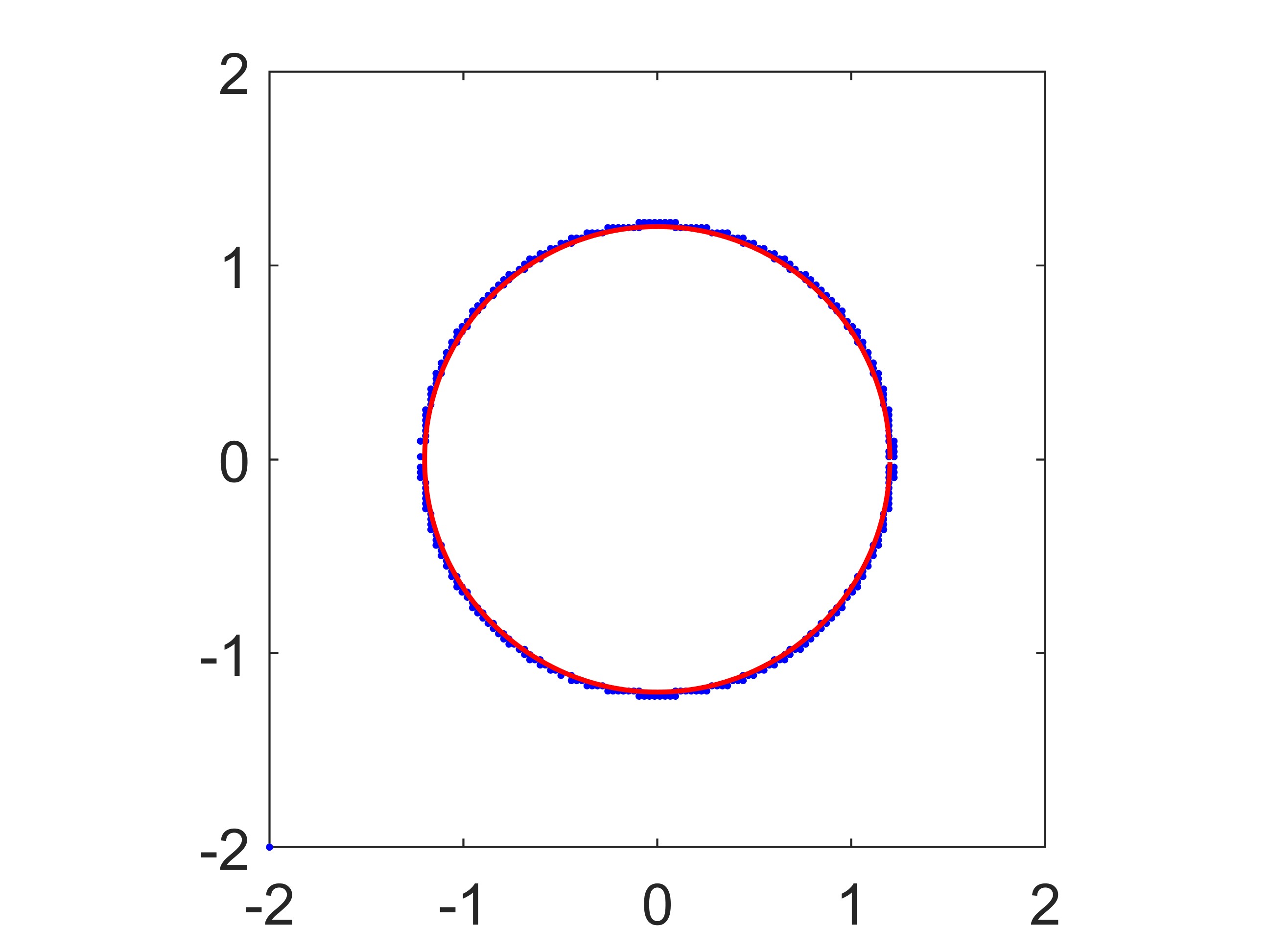}}
	\subfigure[$g=5\times10^{-2}$]{\includegraphics[width=0.3\linewidth]{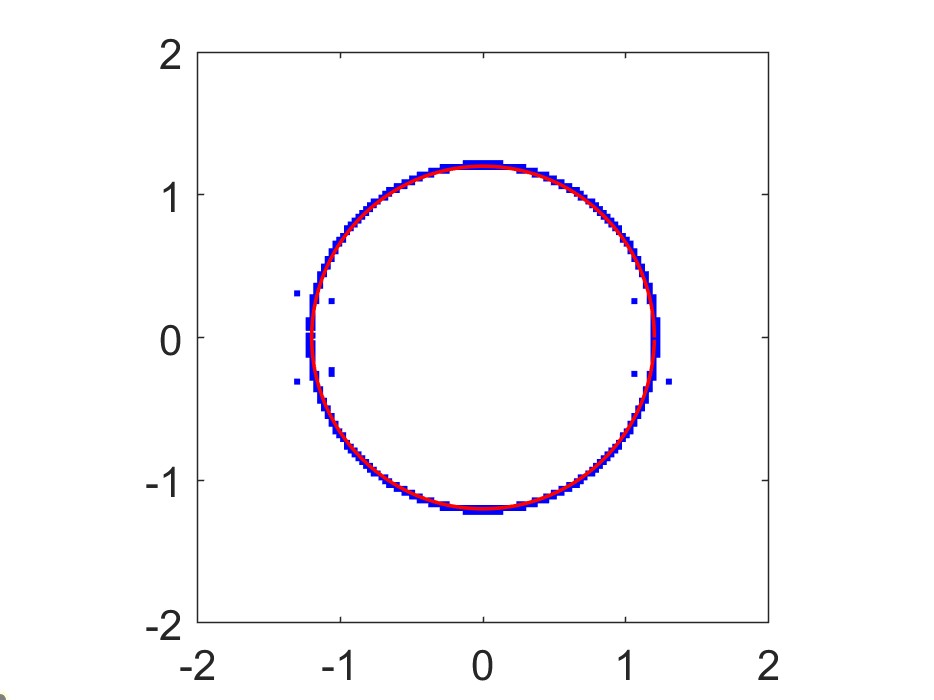}}
	\subfigure[$g=2\times10^{-1}$]{\includegraphics[width=0.3\linewidth]{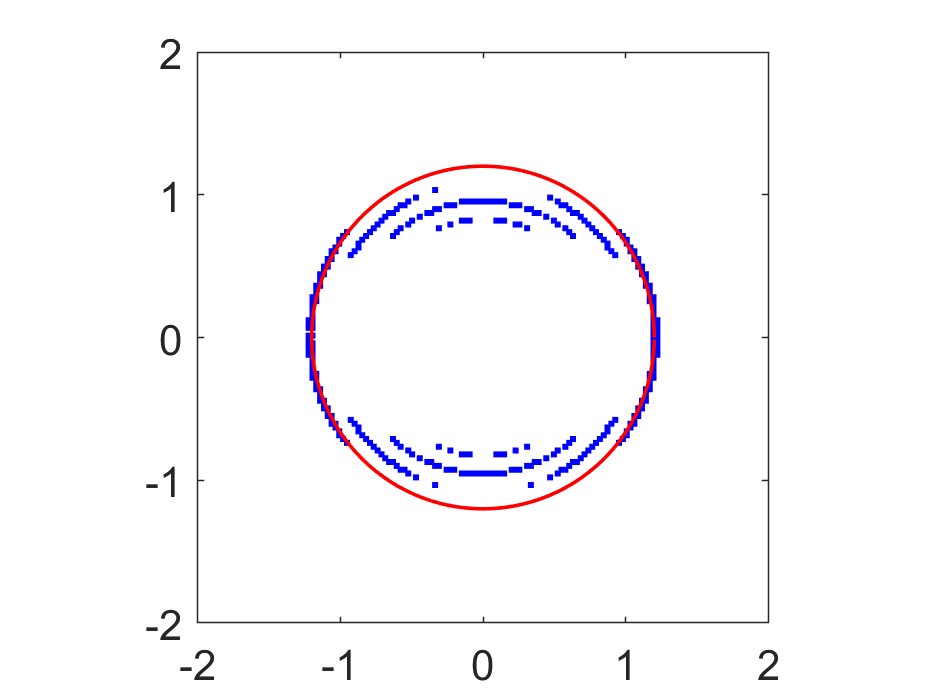}}
  \caption{Reconstruction of the circle-shaped obstacle with $k=25.$}\label{fig: circle}
\end{figure}

\begin{figure}
	\subfigure[]{\includegraphics[width=0.32\linewidth]{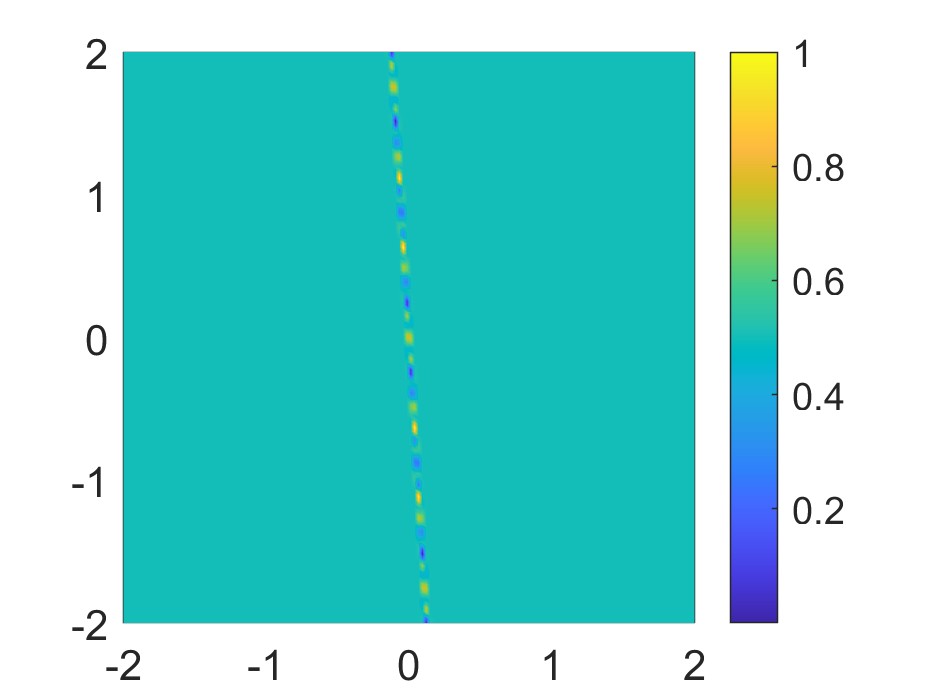}}
	\subfigure[]{\includegraphics[width=0.32\linewidth]{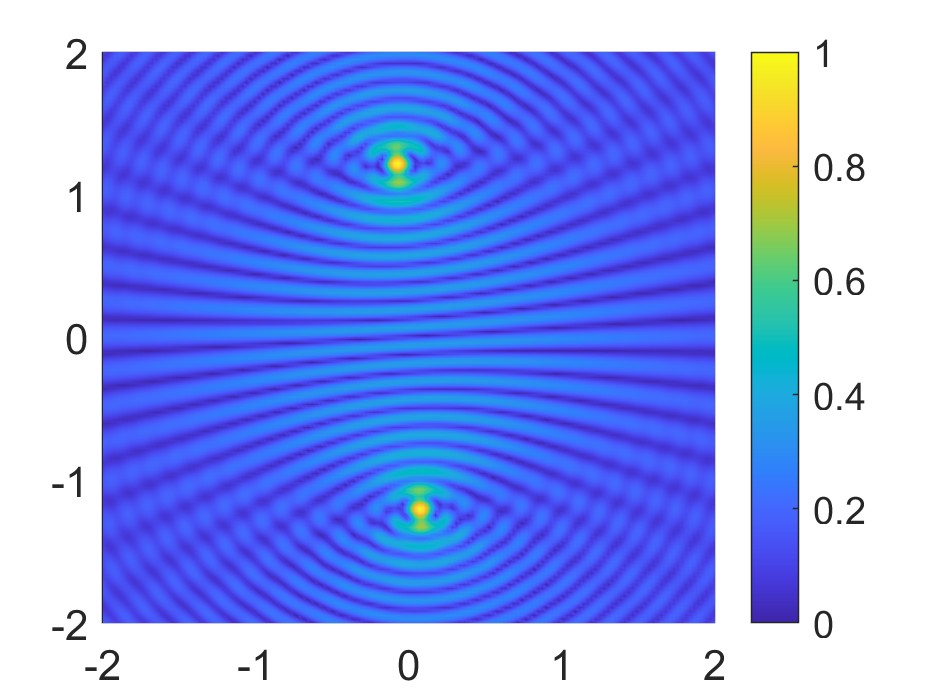}}
	\subfigure[]{\includegraphics[width=0.32\linewidth]{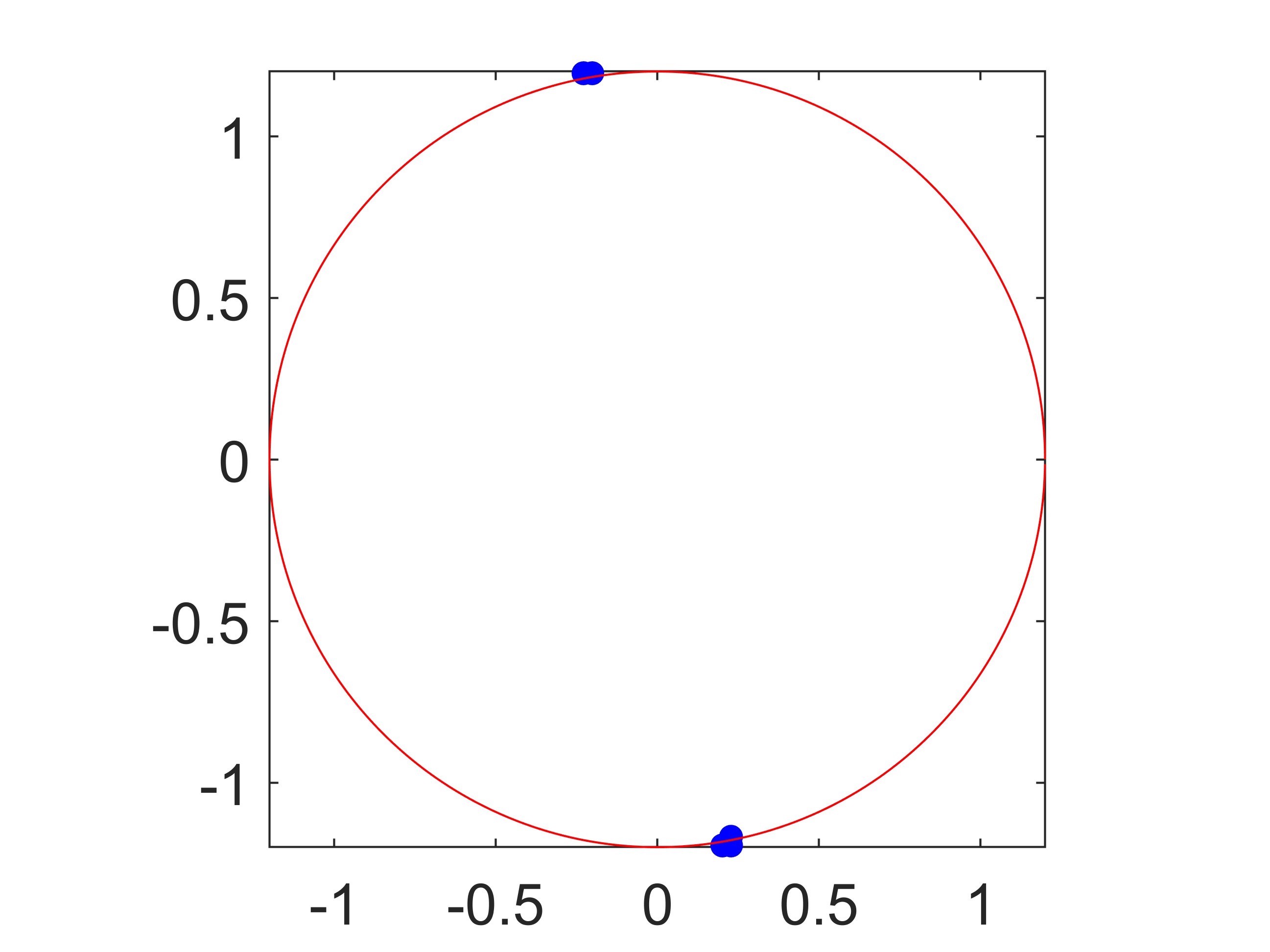}}\\
	\subfigure[]{\includegraphics[width=0.32\linewidth]{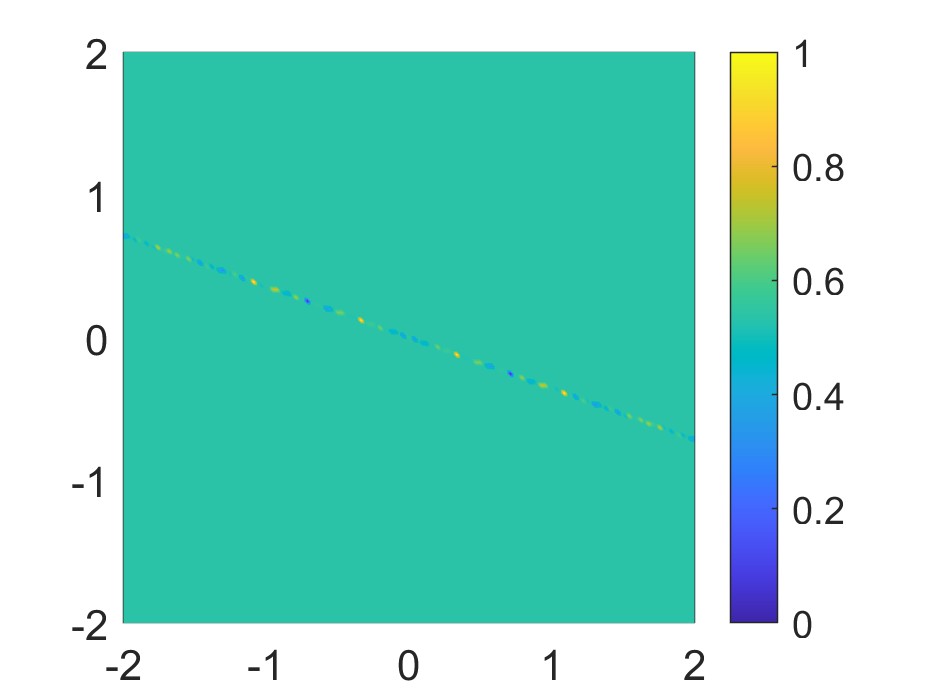}}
	\subfigure[]{\includegraphics[width=0.32\linewidth]{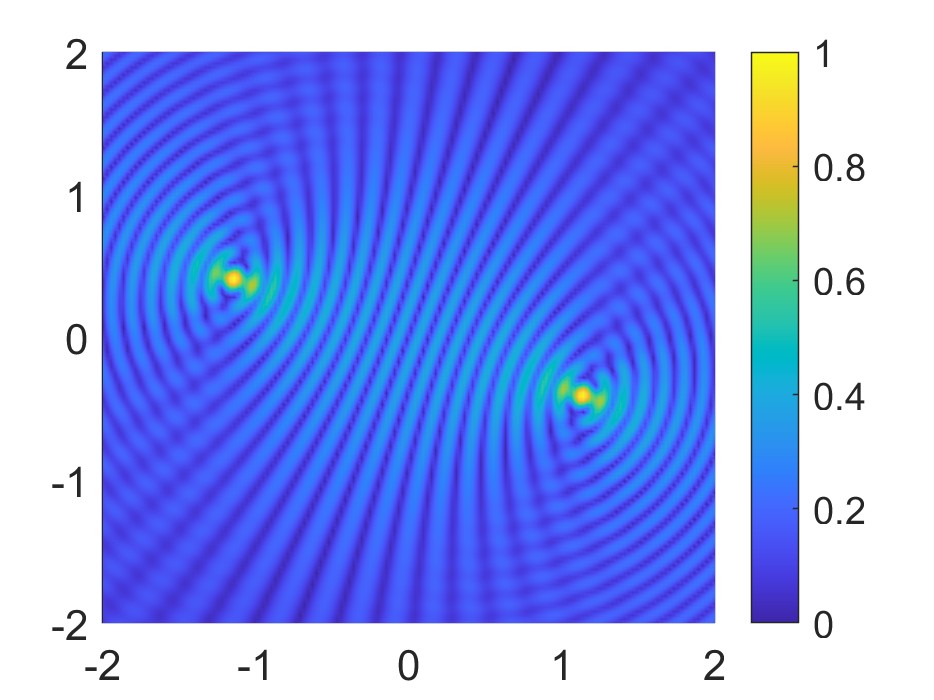}}
	\subfigure[]{\includegraphics[width=0.32\linewidth]{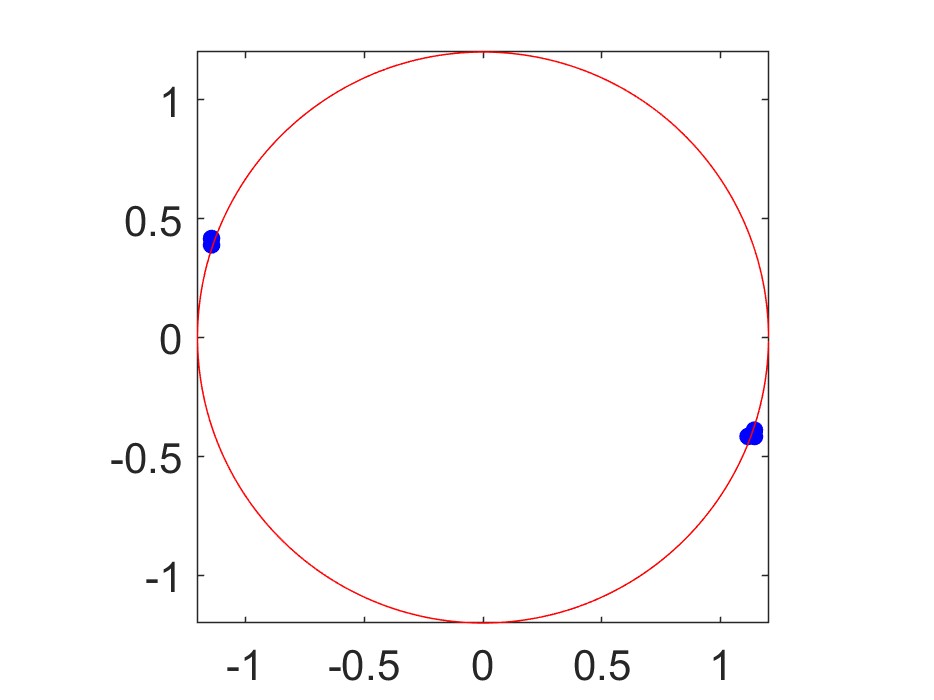}}\\
	\subfigure[]{\includegraphics[width=0.32\linewidth]{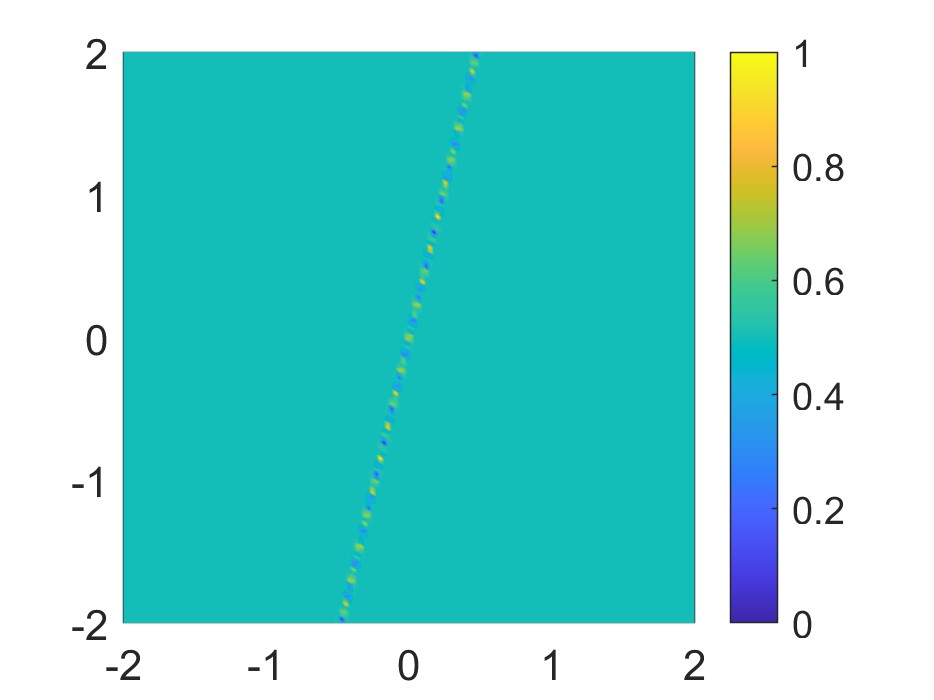}}
	\subfigure[]{\includegraphics[width=0.32\linewidth]{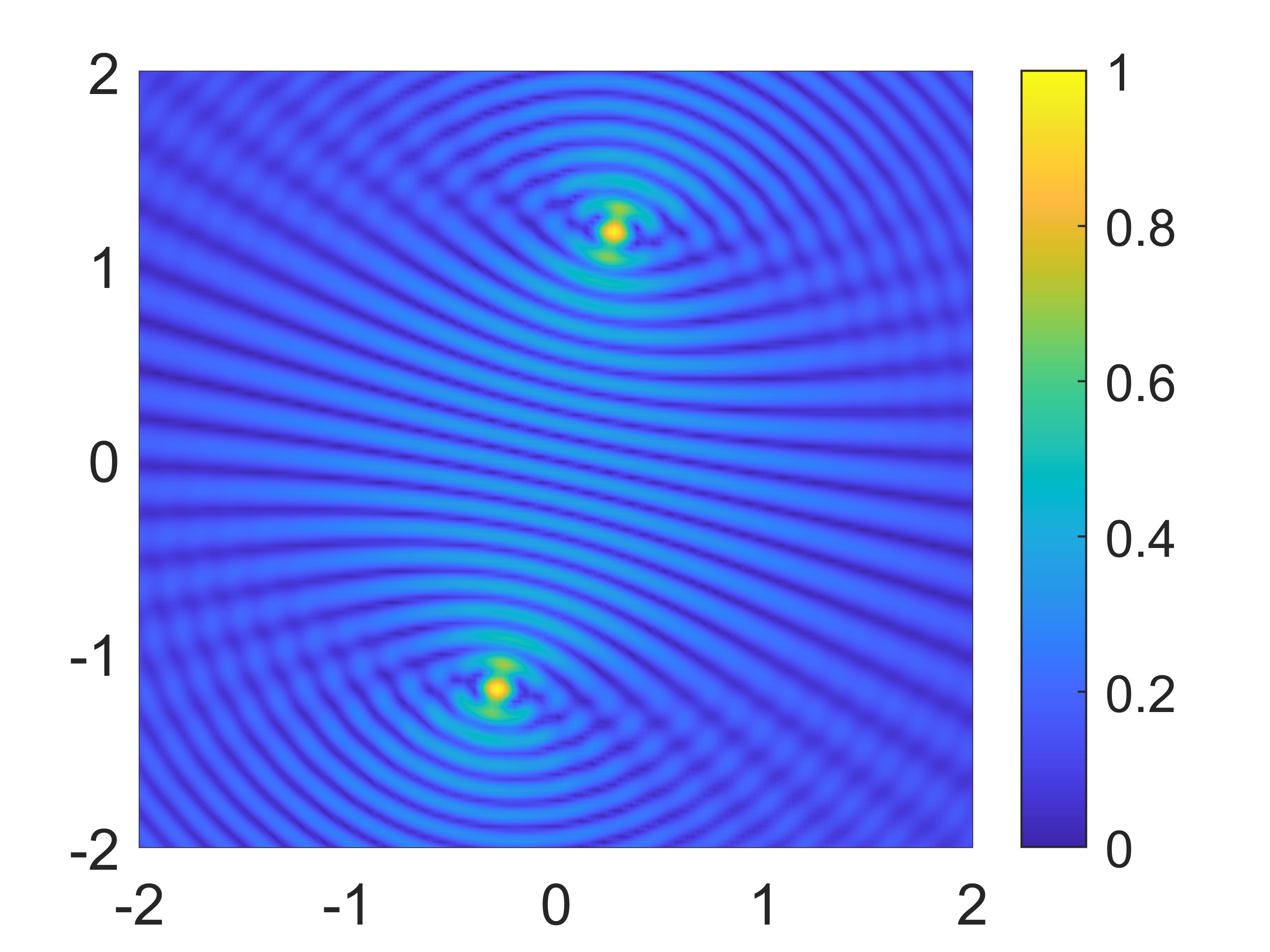}}
	\subfigure[]{\includegraphics[width=0.32\linewidth]{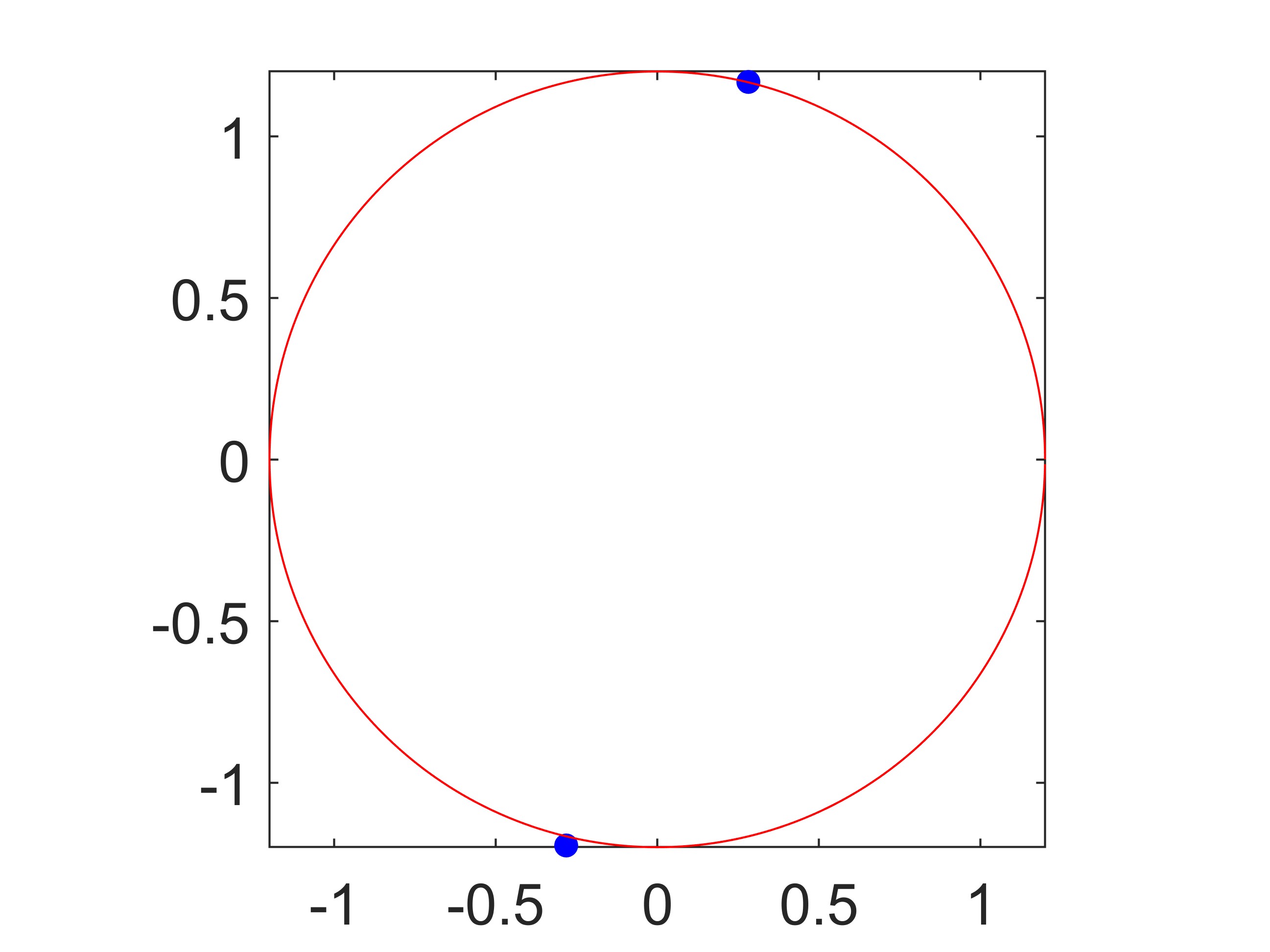}}\\
	\subfigure[]{\includegraphics[width=0.32\linewidth]{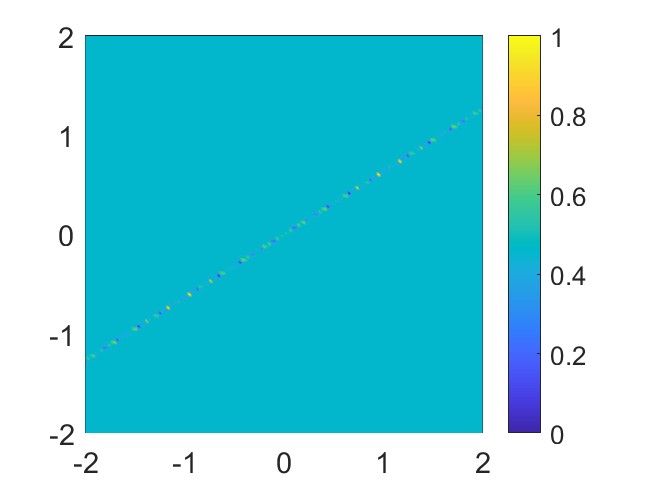}}
	\subfigure[]{\includegraphics[width=0.32\linewidth]{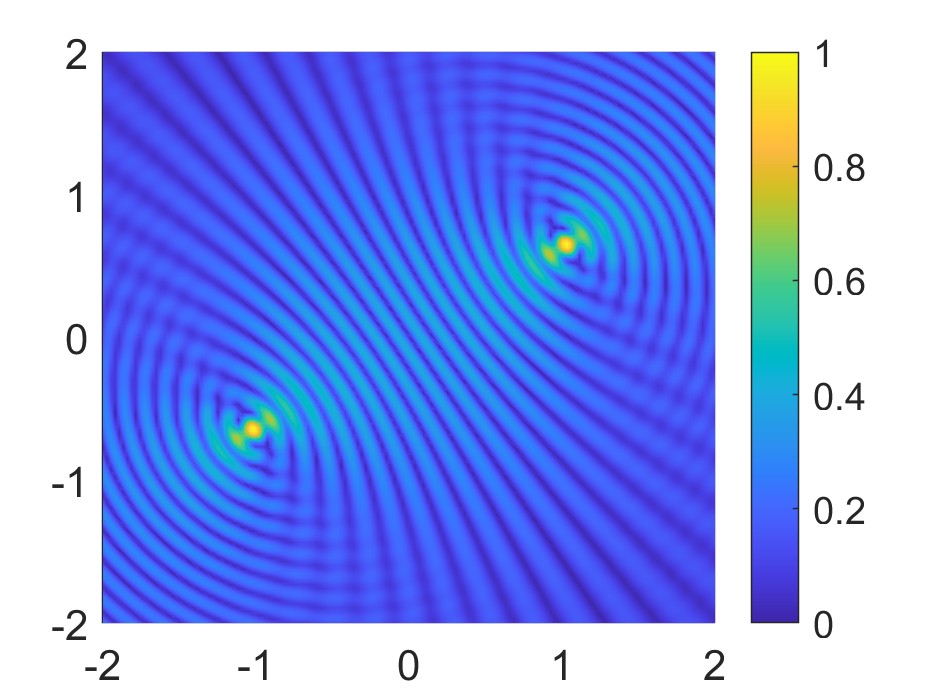}}
	\subfigure[]{\includegraphics[width=0.32\linewidth]{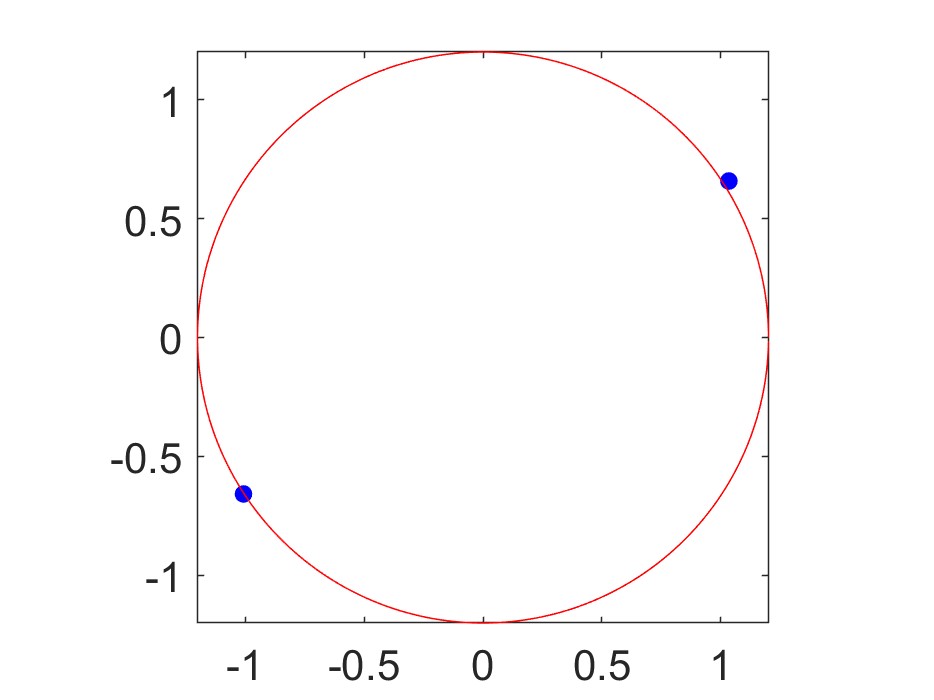}}
  \caption{The sampling process for a single incident direction. (The first row: $j=5$; the second row: $j=100$, the third row: $j=237,$ the last row: $j=430$).}\label{fig: circle_illustrate}
\end{figure}

\begin{example}
  In the second example, we test the performance of the proposed method by reconstructing the kite-shaped obstacle. The true boundary of the obstacle is given by the parameterized form
  $$
  x(t)=(\cos t+0.65\cos 2t-0.65, 1.5\sin t),\quad t\in[0,2\pi).
  $$
  
  Typically, the recovery of the kite-shaped obstacle is regarded as a benchmark for the quality of a reconstruction algorithm. Particularly, a major concern is whether the concave part of the obstacle can be successfully recovered. In this example, we take the parameters as $N_R=256, g=5\times10^{-4},$ and $M=2$.
  
  By respectively taking $N_d$ to be $128, 256, 512, 1024$ and 2048, we exhibit the reconstructions for $k=20$ in \Cref{fig: kite1}. From \Cref{fig: kite1}, we observe that the obstacle is well reconstructed, except for the short arcs close to the two wings. Further, the reconstruction can be improved using relatively more incident directions. A reason accounting for this is that the boundary is determined point by point, and each incident wave incurs the corresponding scattered field and thus determines a local point or several local points on the boundary. As a result, more incident waves could cover more points to approximate the boundary, which improves the final reconstruction.
  
  Next, we test the performance of the proposed method by recovering the kite-shaped obstacle from limited-aperture observation. Now we choose $N_d=256,N_R=512\left[\frac{2\pi}{\theta_{\max}}\right],$ with $\theta_{\max}\in(0,2\pi]$ being the observation aperture. We demonstrate the reconstructions with $k=20$ in \Cref{fig: kite2}. In \Cref{fig: kite2}, we utilize the black dashed line to represent the observed aperture, and the green dashed line to facilitate the visualization of the aperture angle.
  We can observe from \Cref{fig: kite2} that the full boundary of the obstacle can be well-reconstructed when the observation aperture is relatively large, such as $3\pi/2,$ or even $\pi.$ 
  Nevertheless, for a smaller observation aperture such as $\frac{\pi}{2},$ it can be found from \Cref{fig: kite2}(e)--\Cref{fig: kite2}(f) that only the illuminated part is well-reconstructed while the shadow domain may not be well identified due to the lack of information.
\end{example}

\begin{figure}
	\centering
	\subfigure[model setup]{\includegraphics[width=0.3\linewidth]{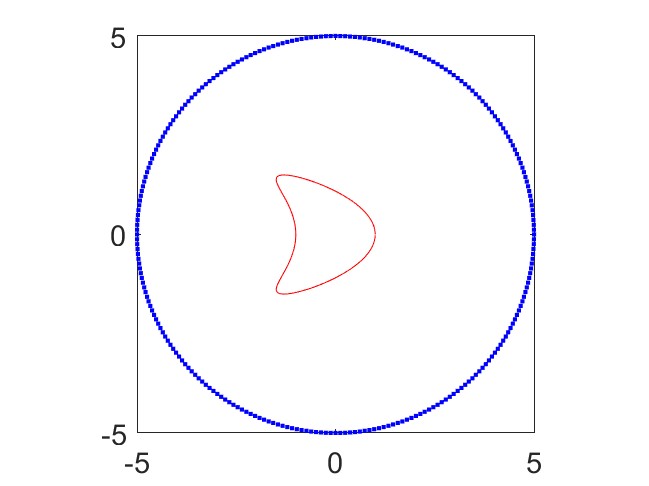}}
    \subfigure[$N_d=128$]{\includegraphics[width=0.3\linewidth]{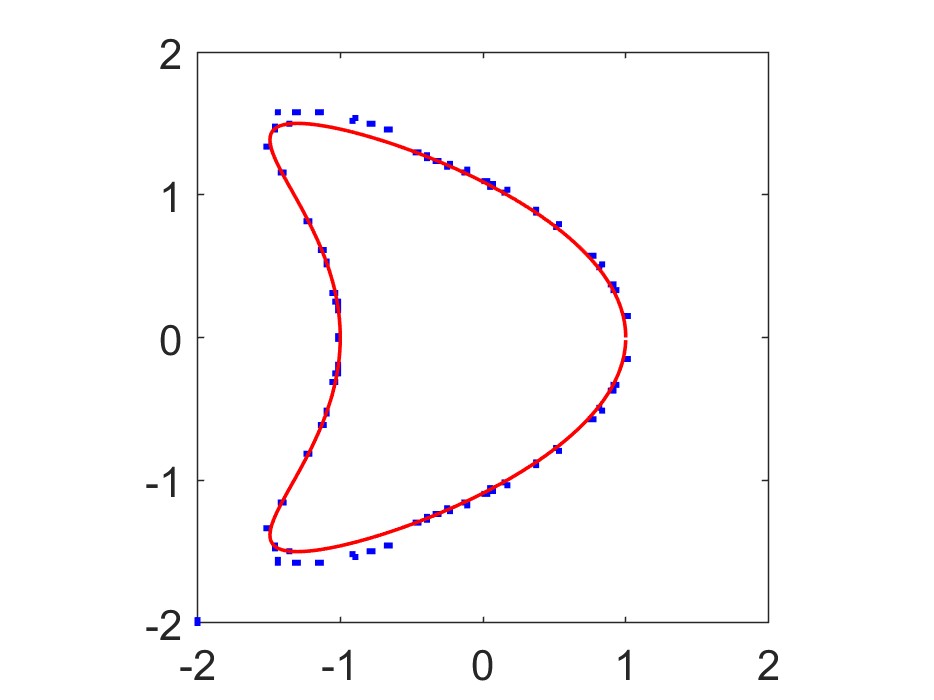}}
    \subfigure[$N_d= 256$]{\includegraphics[width=0.3\linewidth]{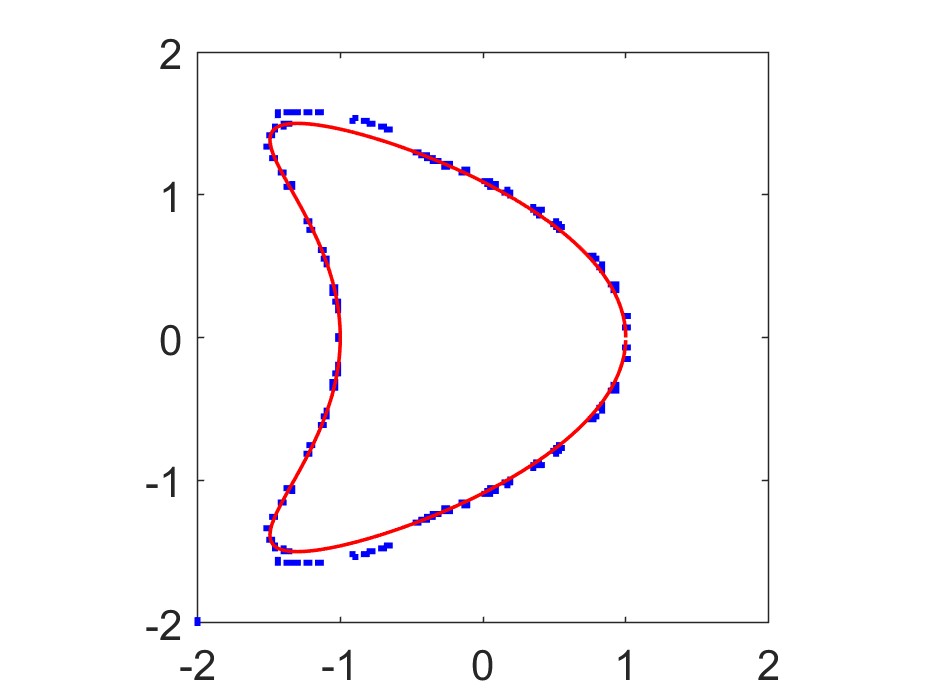}}\\
    \subfigure[$N_d= 512$]{\includegraphics[width=0.3\linewidth]{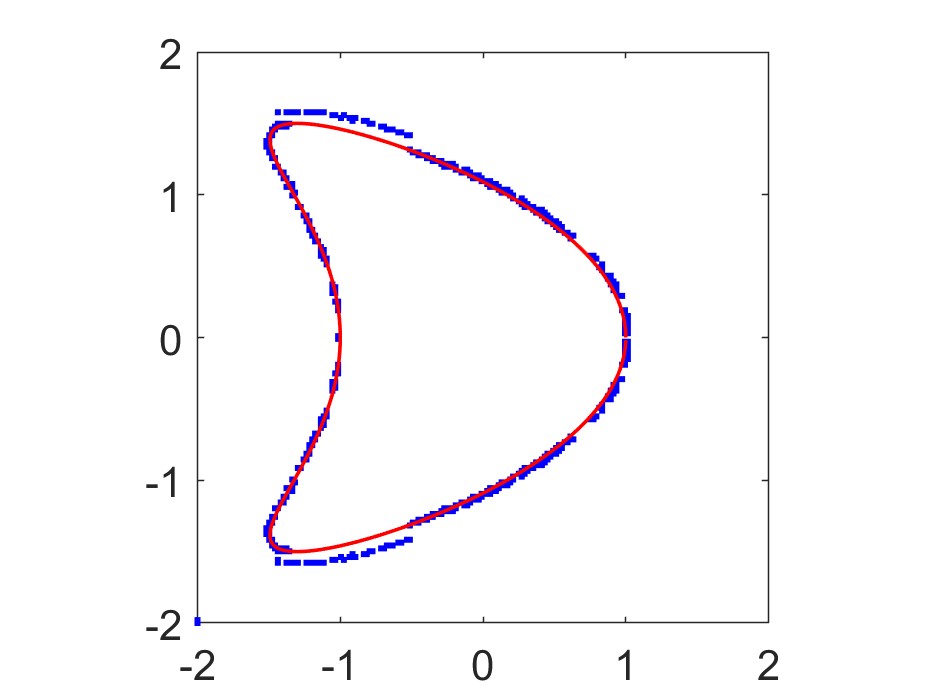}}
    \subfigure[$N_d= 1024$]{\includegraphics[width=0.3\linewidth]{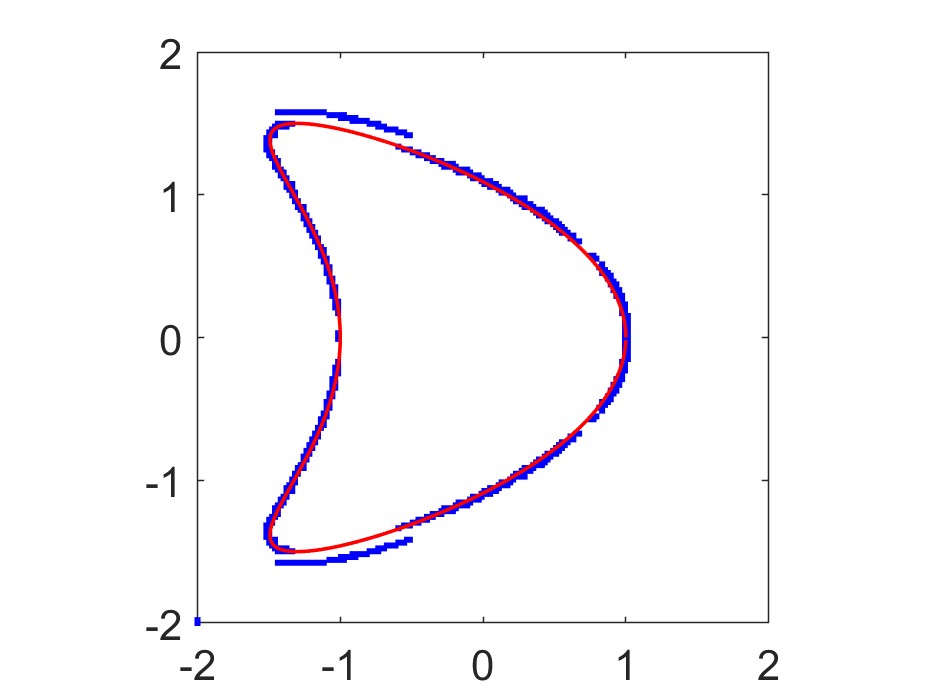}}
    \subfigure[$N_d= 2048$]{\includegraphics[width=0.3\linewidth]{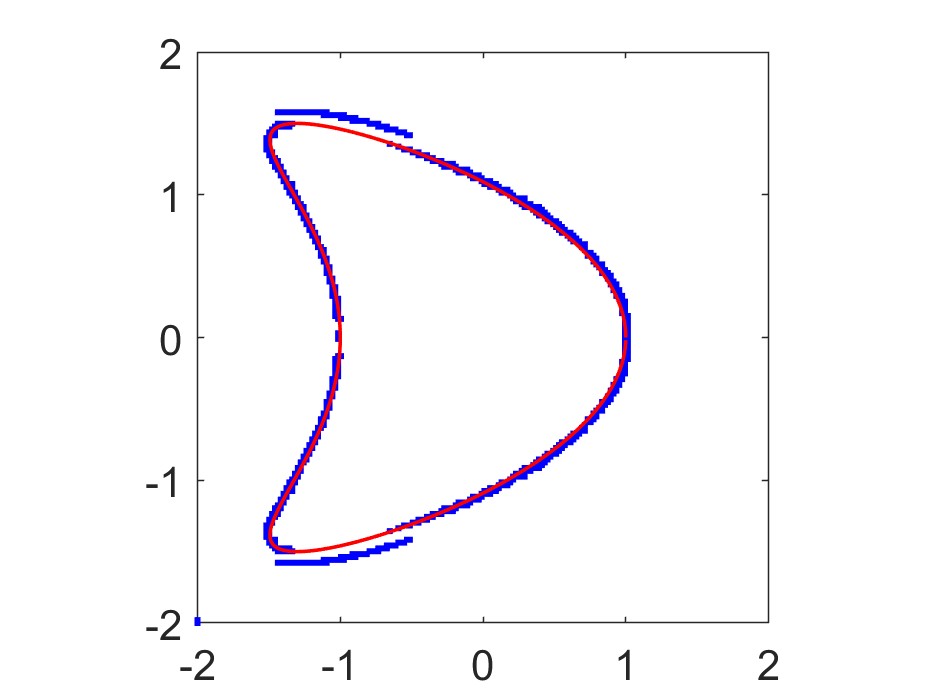}}
  \caption{Reconstruction of the kite-shaped obstacle for $k=20.$}\label{fig: kite1}
\end{figure}

\begin{figure}
    \centering
    \subfigure[]{\includegraphics[width=0.3\linewidth]{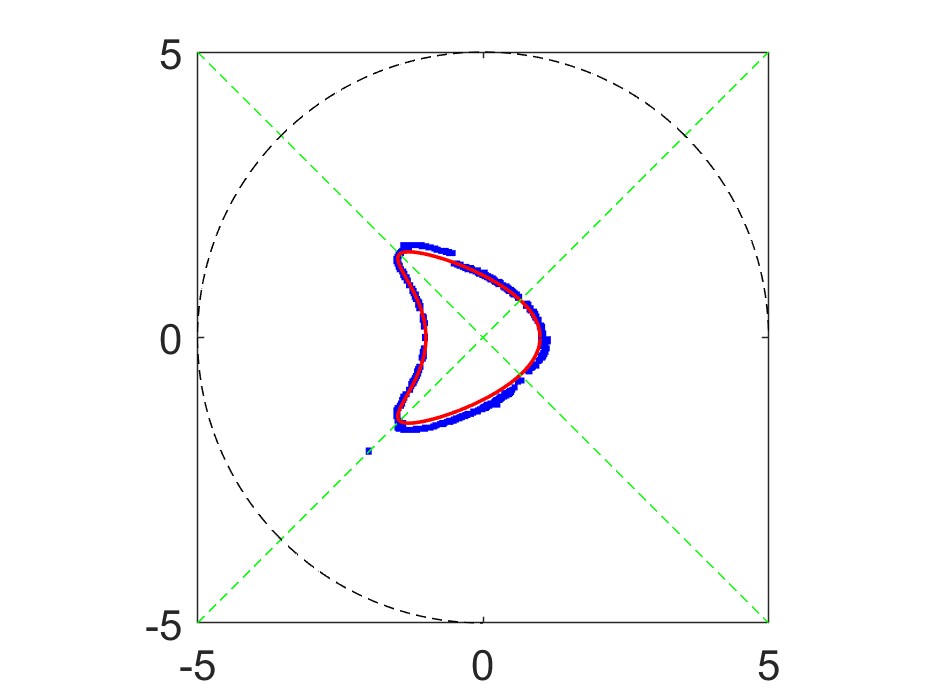}} 
    \subfigure[]{\includegraphics[width=0.3\linewidth]{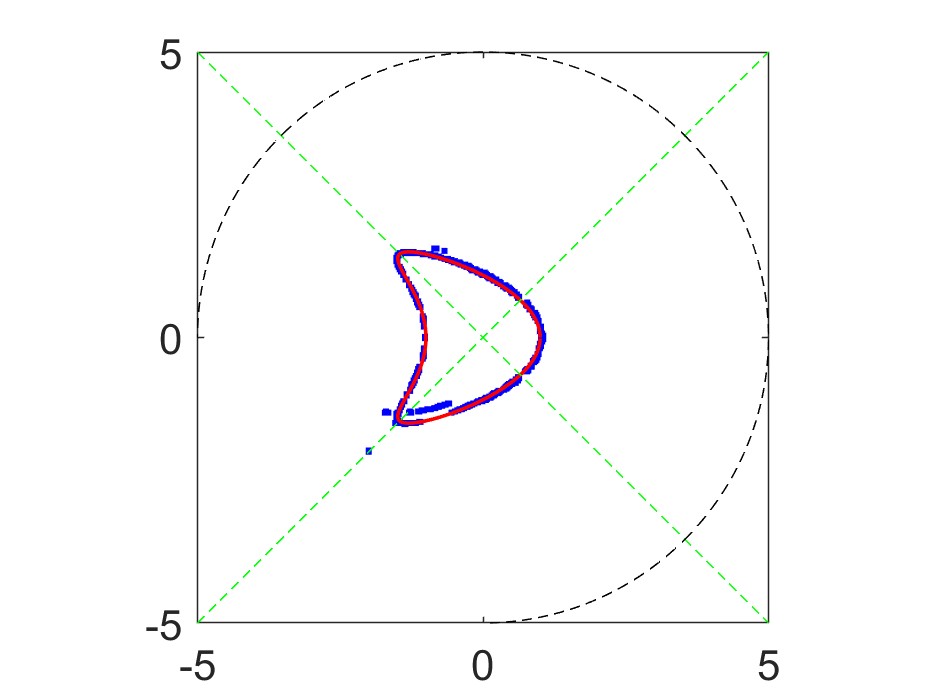}}
    \subfigure[]{\includegraphics[width=0.3\linewidth]{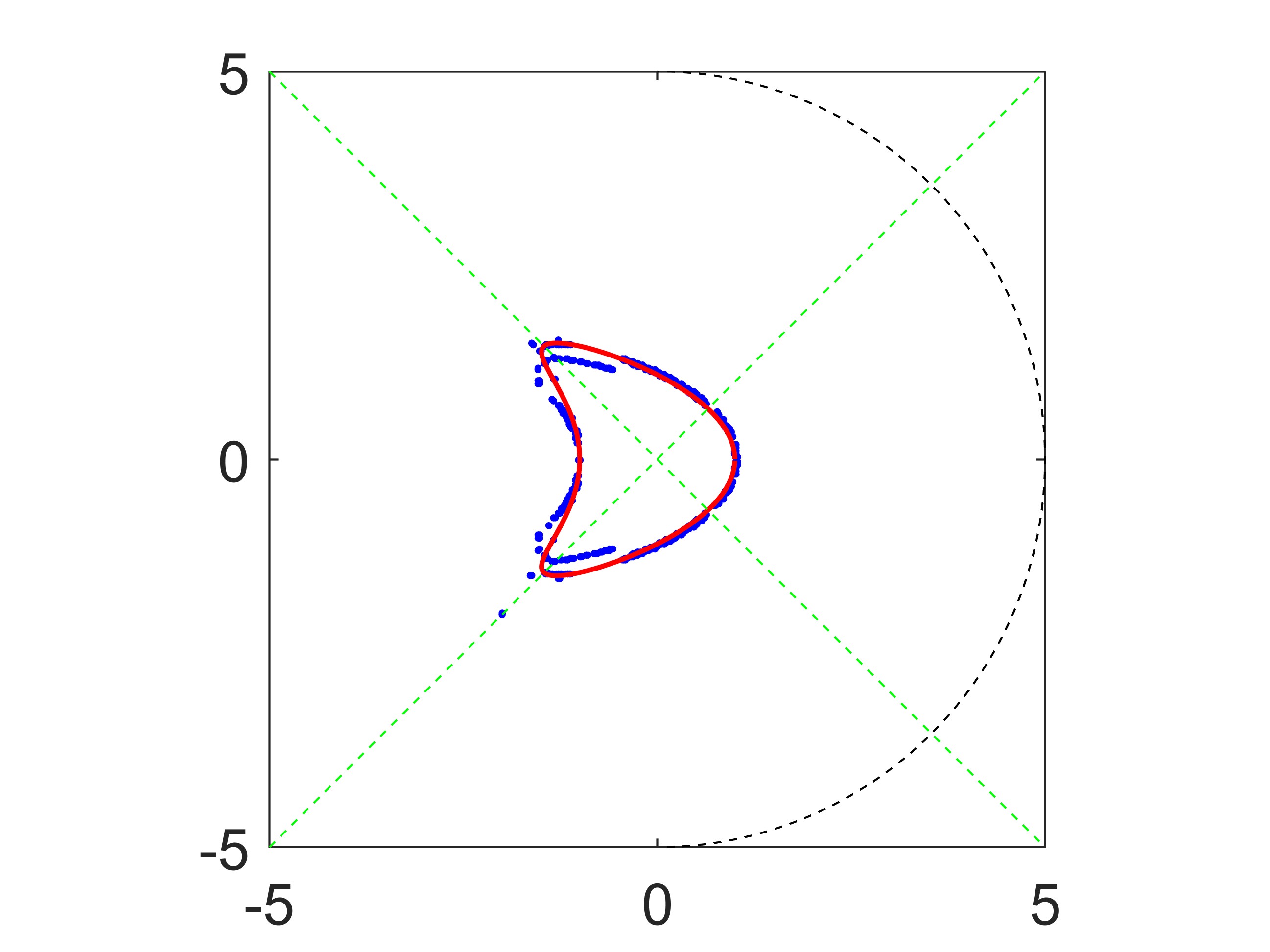}}\\
    \subfigure[]{\includegraphics[width=0.3\linewidth]{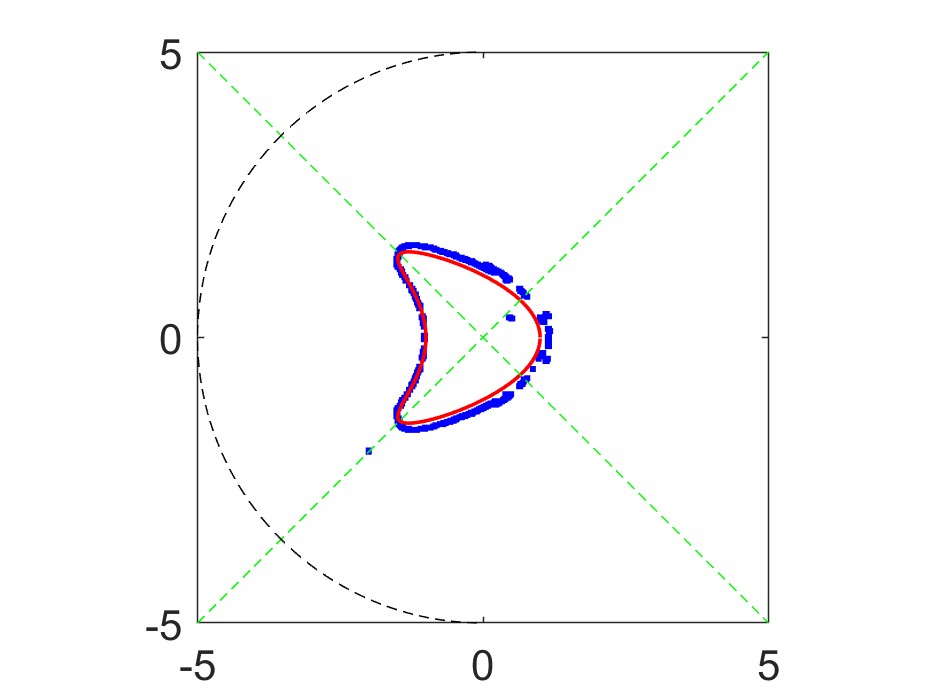}}
    \subfigure[]{\includegraphics[width=0.3\linewidth]{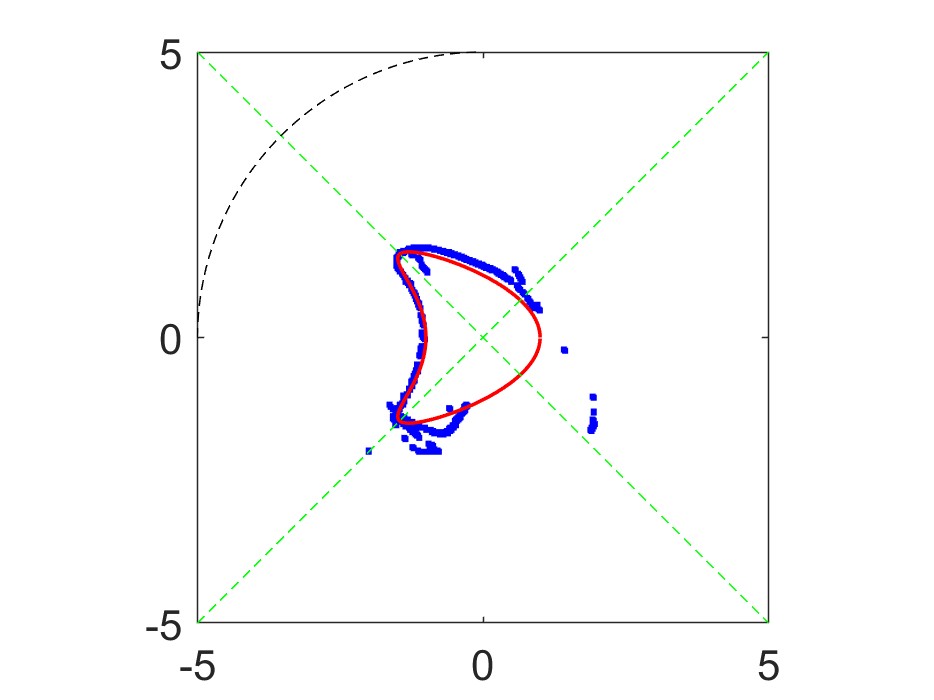}}
    \subfigure[]{\includegraphics[width=0.3\linewidth]{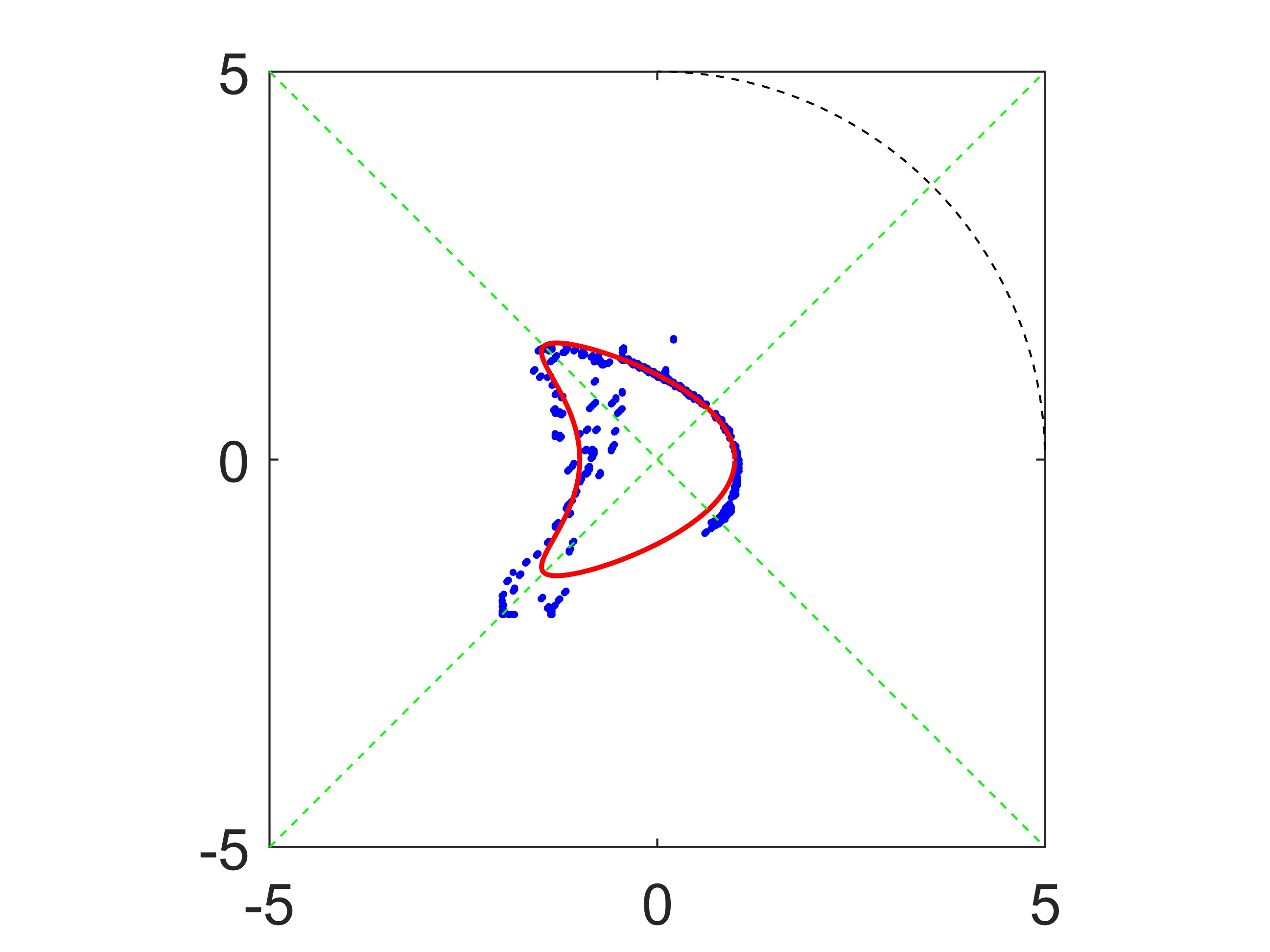}}
  \caption{Reconstruction of the kite-shaped obstacle for $k=20.$}\label{fig: kite2}
\end{figure}

\begin{example}
  In this example, we are devoted to reconstructing the obstacle with L-leaf components. The boundary is parameterized by 
  \[
  x(t)=(1+0.2\cos Lt)(\cos t,\sin t),\quad L=3,4,5.
  \]
  
  By taking $k=25, g=3\times10^{-3},$ we show the reconstructions for $N_d=1024,\,N_R=128$ in \Cref{fig: Lleaf}.
  These results imply that the method performs well for imaging both the convex and concave parts of the obstacle. In addition, our imaging scheme is insensitive to the noise level. No matter the noise level $\delta$ is $5\%, 10\%$, or even $20\%,$ our method can always obtain an accurate reconstruction. These results illustrate that by choosing the incident field to be the tapered wave, the sampling-based method is capable of outputting the quantitative results. 
  
However, we would like to point out that the value of $g$ plays an important role in the reconstruction. By varying the values of $g$, we display the reconstructions for $k=25,$ $L=3$ and $\delta=5\%$ in \Cref{fig: pear}. We readily observe that the reconstructions are quite good when $g$ is relatively small. However, as the value of $g$ increases, the reconstruction may be detored and there may be a few errorous boundary-point recoveries. Nevertheless, the overall boundary can be well-matched and the results are acceptable.

Further, we point out that, to better reconstruct the obstacle, we can adopt a separated-domain sampling strategy as follows: 
\begin{itemize}
  \item In the first step, we divide the sampling domain into two subdomains, such that  
  $$
  \Omega=\Omega_1\cup \Omega_2,
  $$
  with $\Omega_1=[-2,2]\times[-2,0],\,\Omega_2=[-2,2]\times[0,2].$
  \item For $\ell=1,2,$ we compute the indicator function \eqref{Indicator} in these two domains, respectively. By collecting the first 512 maximum points of the indicator function in $\Omega_1$ and $\Omega_2$, we regard these two group maximum points as the approximation to the obstacle above and below the $x$-axis respectively. 
\end{itemize}

Aided by this decomposition approach, we further display the reconstruction in \Cref{fig: pear1}, where the corresponding parameters are the same as those in \Cref{fig: pear}.
Compared with \Cref{fig: pear}, we find that the discontinuous reconstruction on the boundary may be connected. We also note that this method works with the tradeoff of producing an additional line artifact on the interface of the sampling areas $\Omega_1$ and $\Omega_2.$
 
\end{example}

\begin{figure}
	\centering
	\subfigure[]{\includegraphics[width=0.32\linewidth]{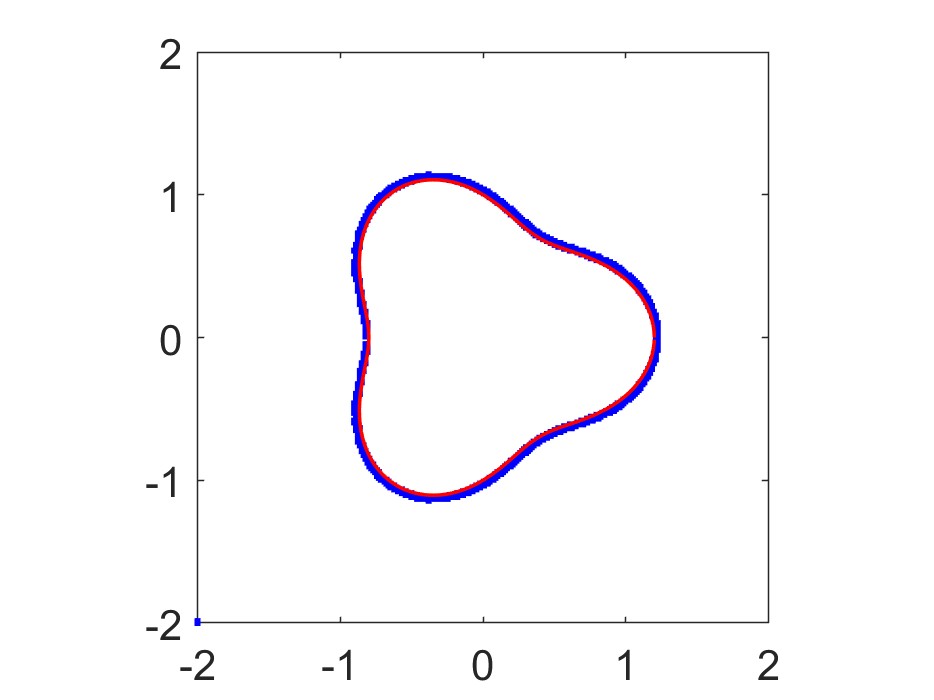}} 
	\subfigure[]{\includegraphics[width=0.32\linewidth]{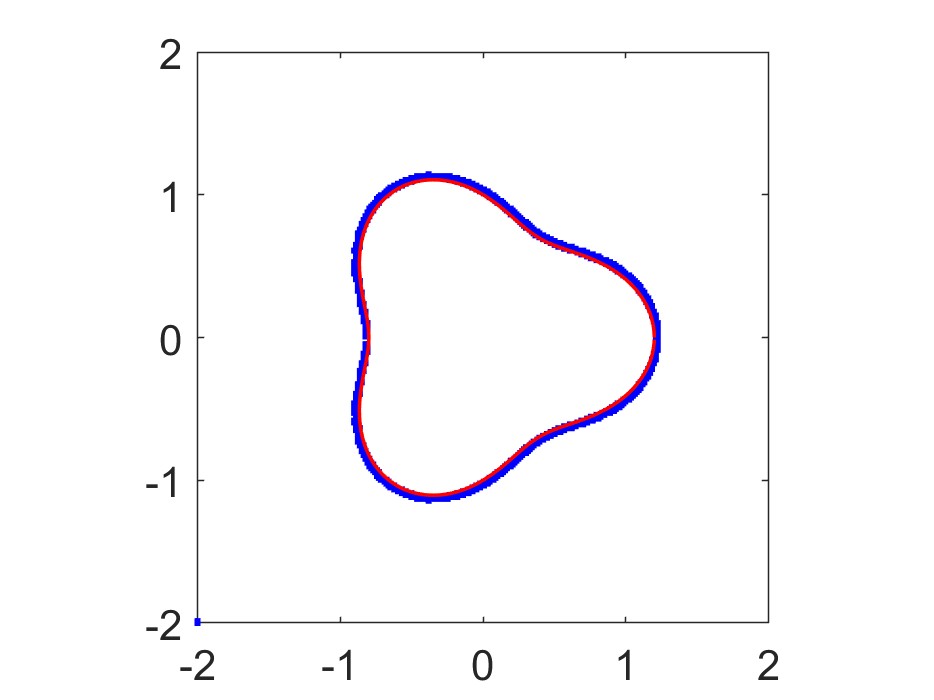}}
       \subfigure[]{\includegraphics[width=0.32\linewidth]{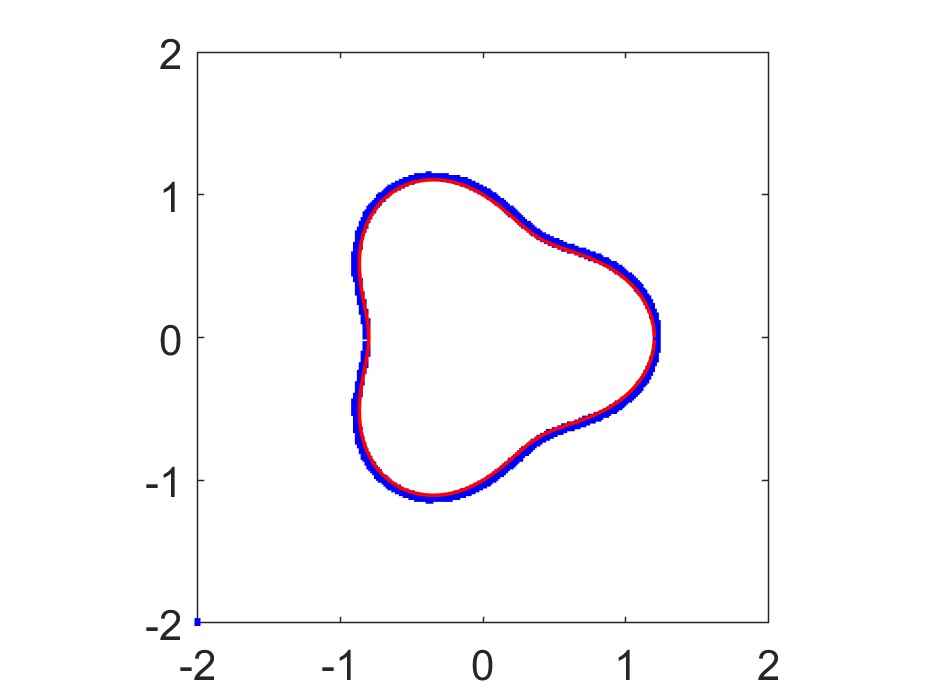}}\\
	\subfigure[]{\includegraphics[width=0.32\linewidth]{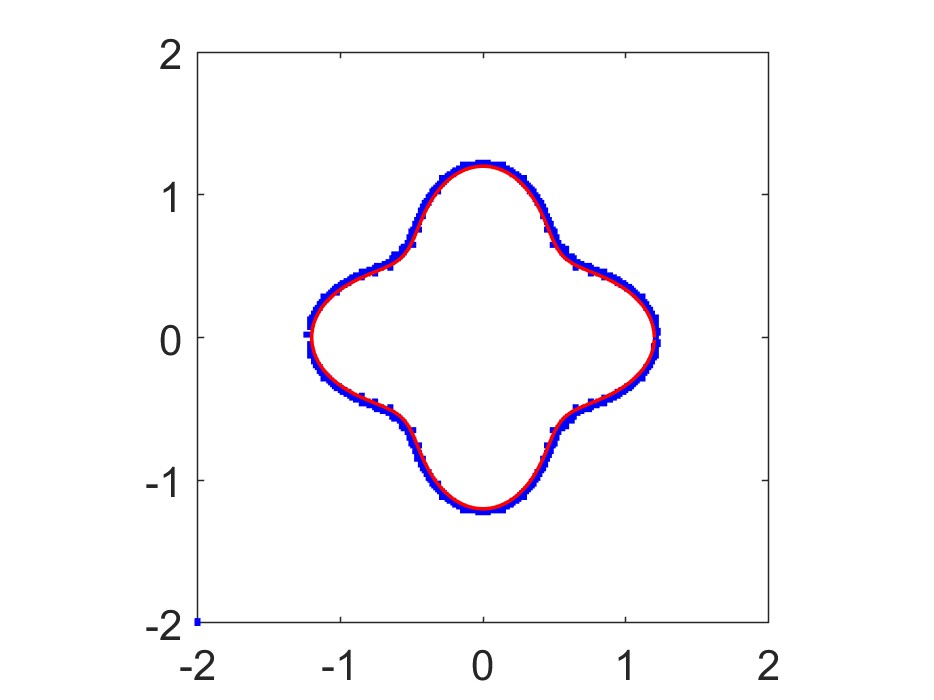}} 
	\subfigure[]{\includegraphics[width=0.32\linewidth]{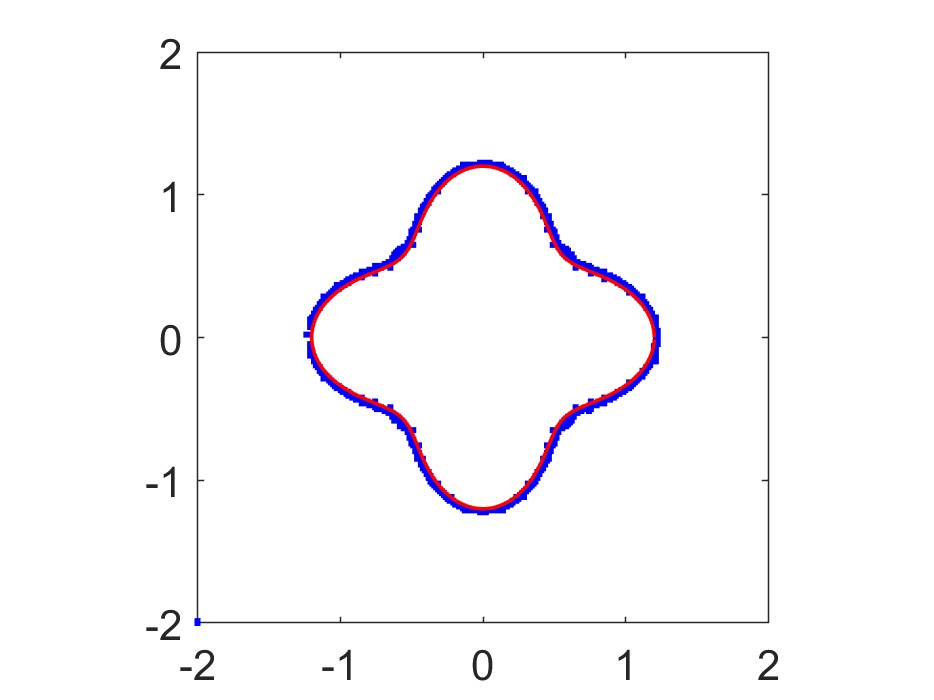}}
       \subfigure[]{\includegraphics[width=0.32\linewidth]{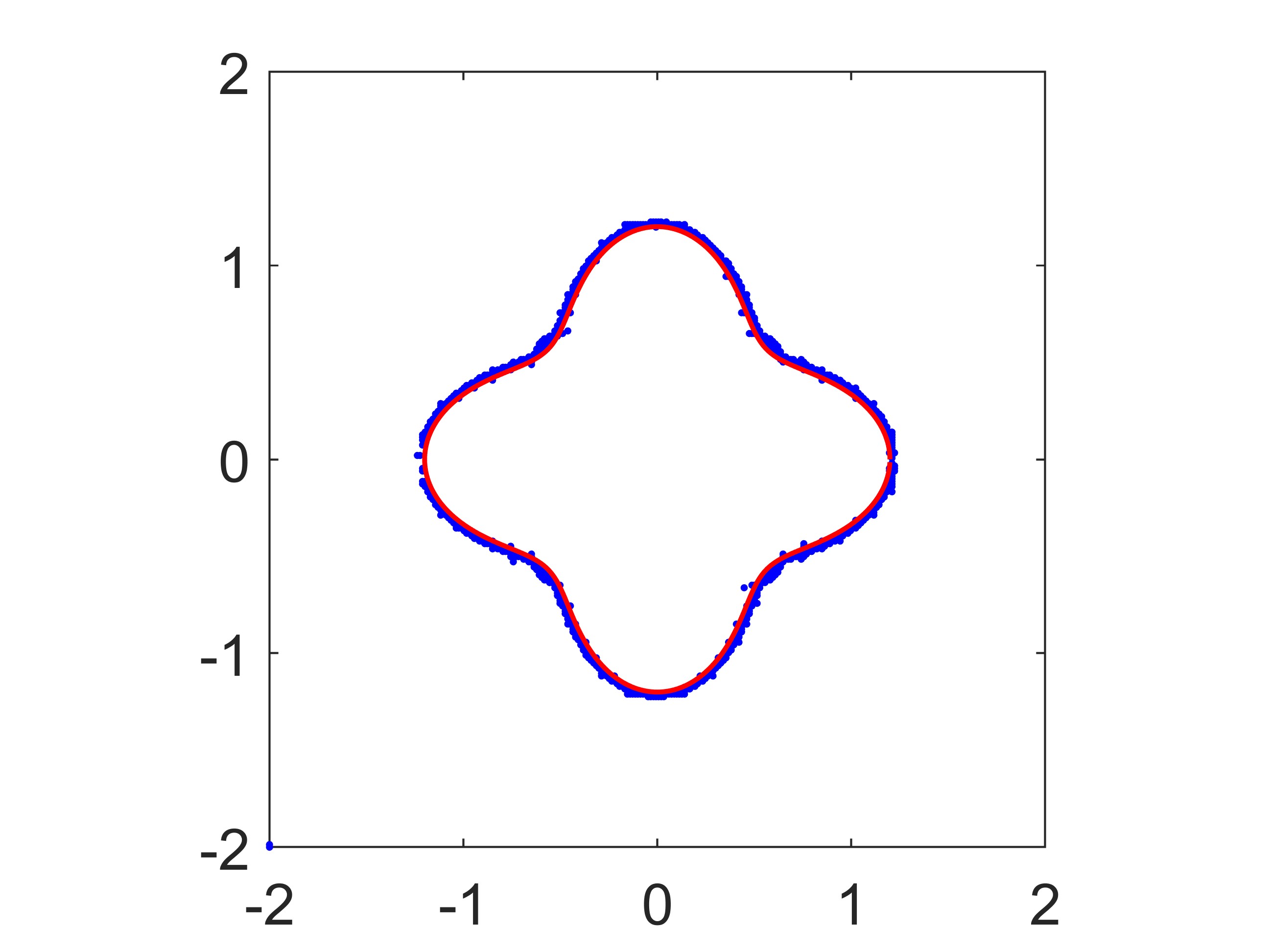}}\\
	\subfigure[]{\includegraphics[width=0.32\linewidth]{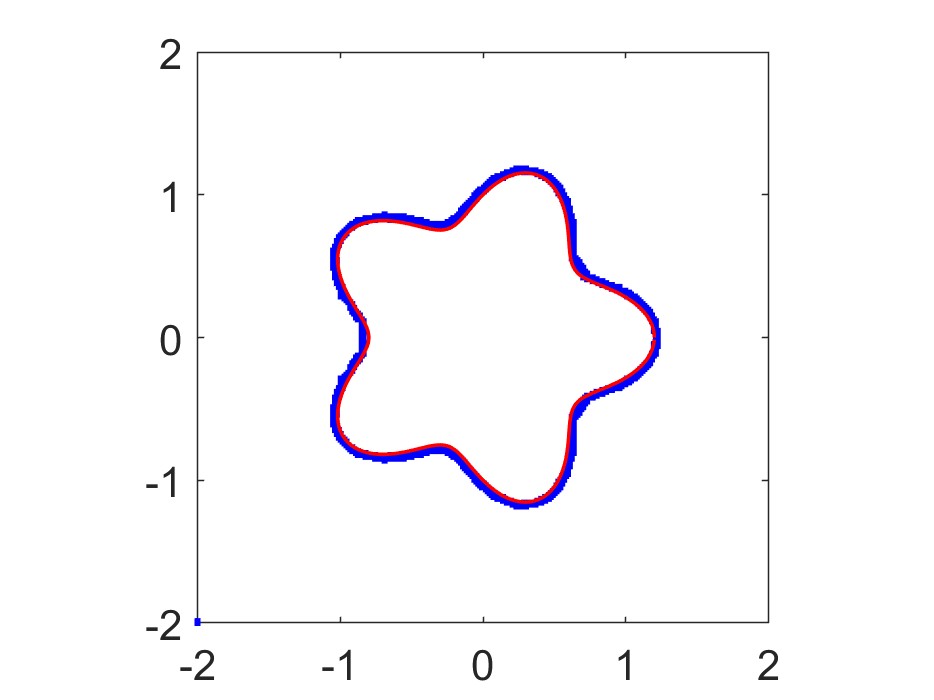}} 
	\subfigure[]{\includegraphics[width=0.32\linewidth]{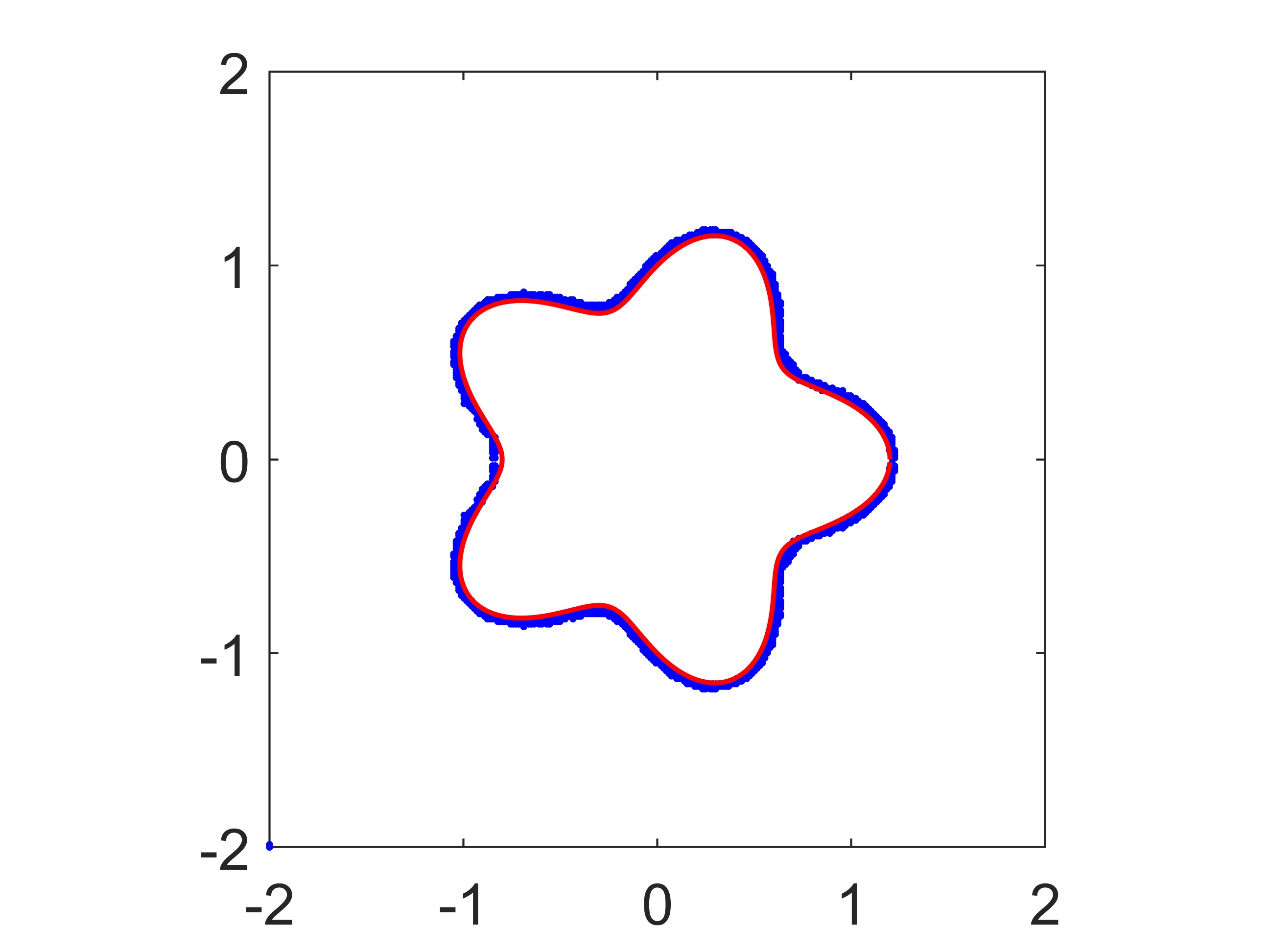}}
    \subfigure[]{\includegraphics[width=0.32\linewidth]{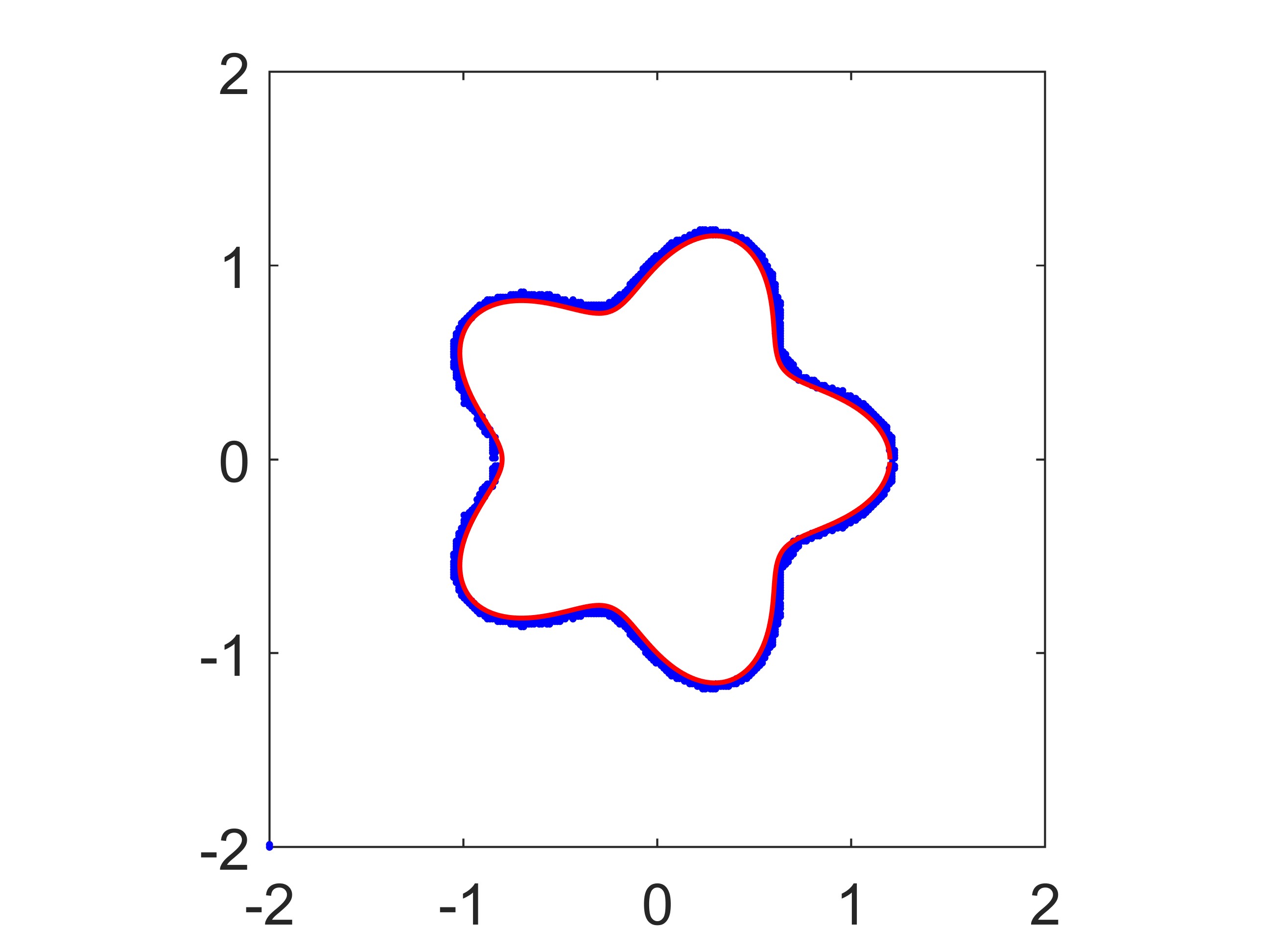}}\\
  \caption{Reconstruction of the L-leaf for $k=25$ (Left column: $\delta = 5\%;$ Middle column: $\delta = 10\%;$ Right column: $\delta = 20\%$).}\label{fig: Lleaf}
\end{figure}

\begin{figure}
    \centering
    \subfigure[$g=0.1$]{\includegraphics[width=0.32\linewidth]{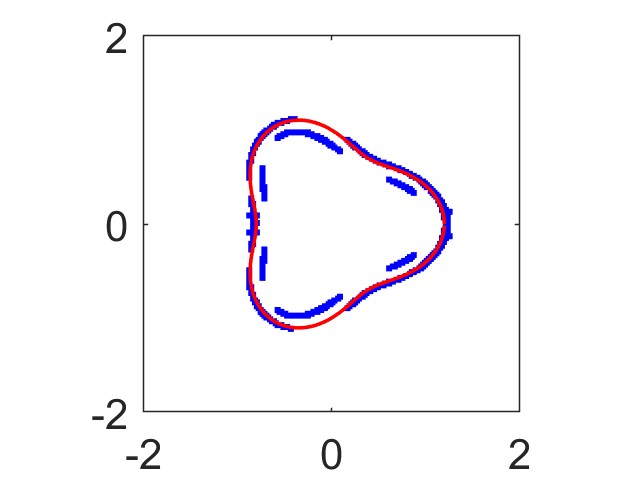}} 
    \subfigure[$g=0.05$]{\includegraphics[width=0.32\linewidth]{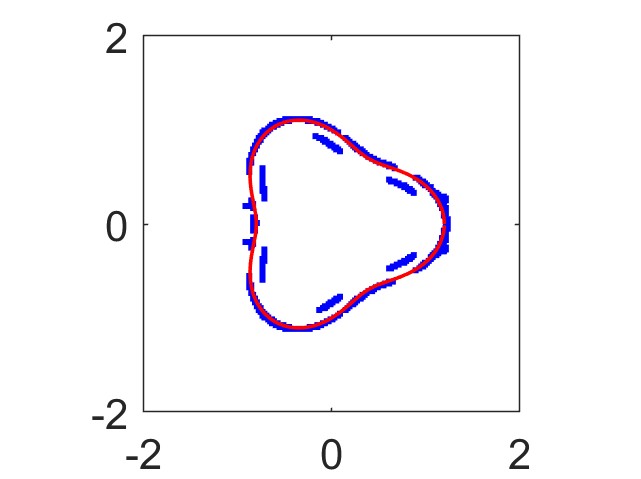}}
    \subfigure[$g=0.01$]{\includegraphics[width=0.32\linewidth]{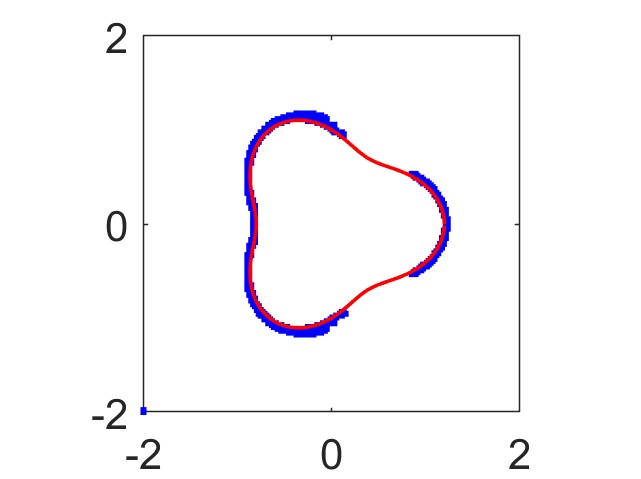}}
    \subfigure[$g=0.005$]{\includegraphics[width=0.32\linewidth]{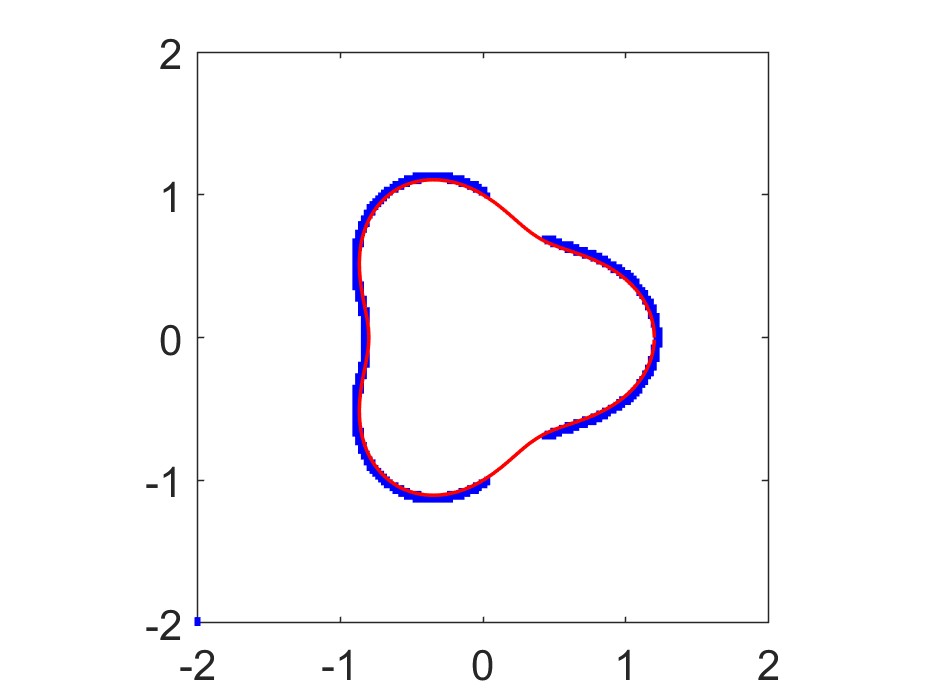}}
    \subfigure[$g=0.003$]{\includegraphics[width=0.32\linewidth]{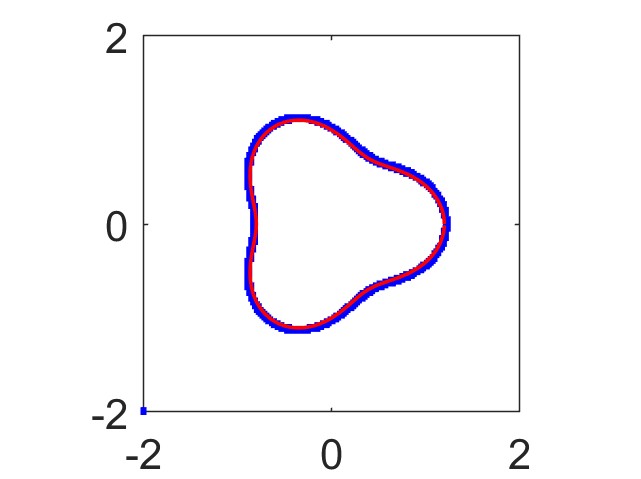}}
    \subfigure[$g=0.001$]{\includegraphics[width=0.32\linewidth]{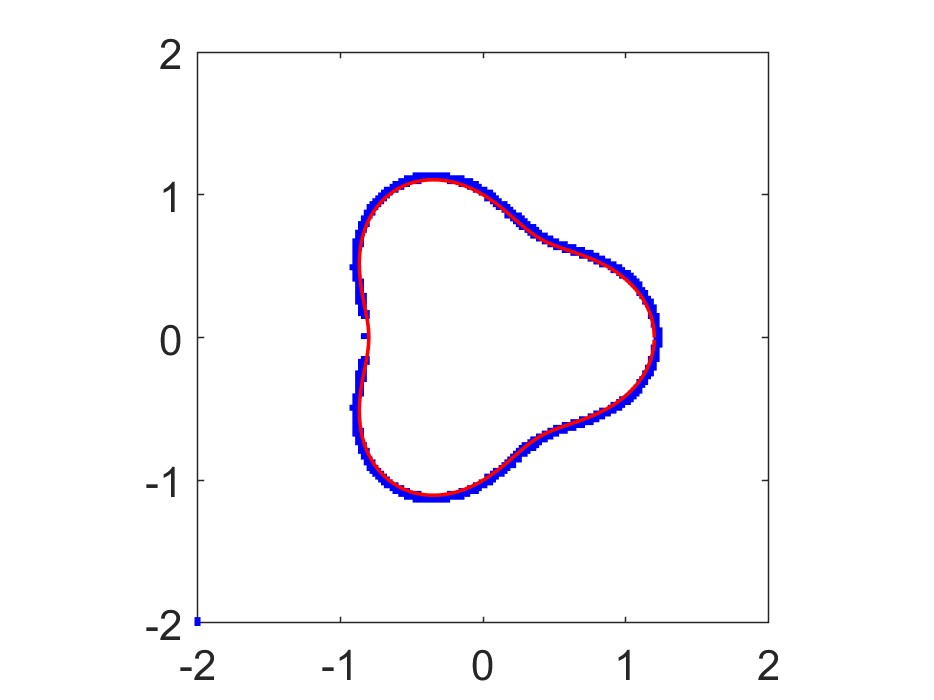}}
  \caption{Reconstruction of the pear for $k=25$ under different $g.$ }\label{fig: pear}
\end{figure}

\begin{figure}
    \centering
    \subfigure[$g=0.1$]{\includegraphics[width=0.32\linewidth]{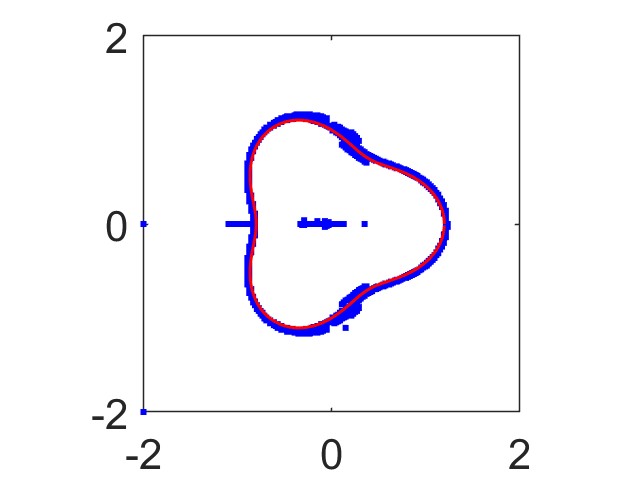}} 
    \subfigure[$g=0.05$]{\includegraphics[width=0.32\linewidth]{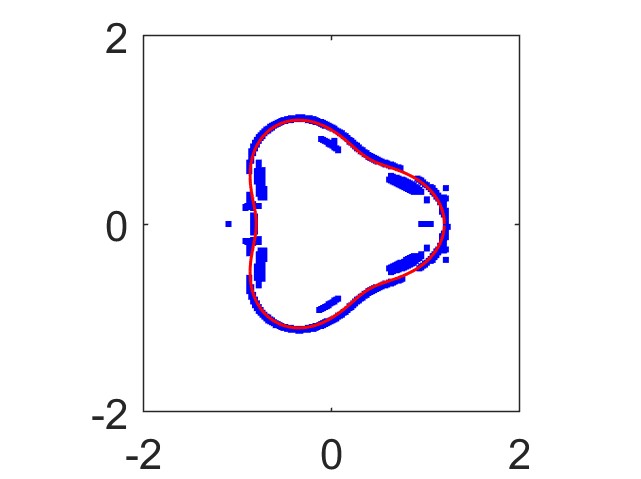}}
    \subfigure[$g=0.01$]{\includegraphics[width=0.32\linewidth]{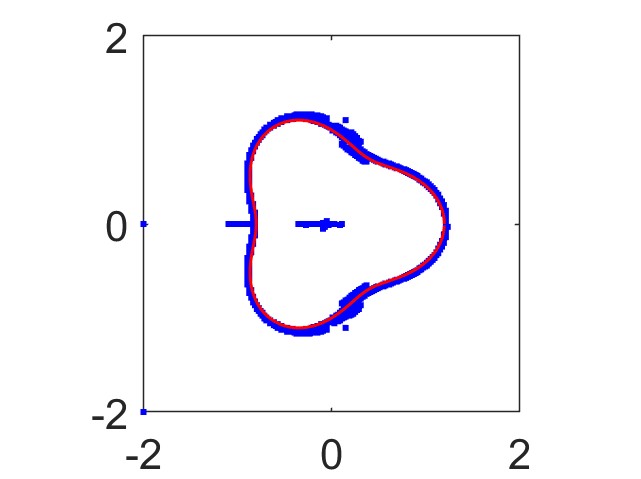}}
    \subfigure[$g=0.005$]{\includegraphics[width=0.32\linewidth]{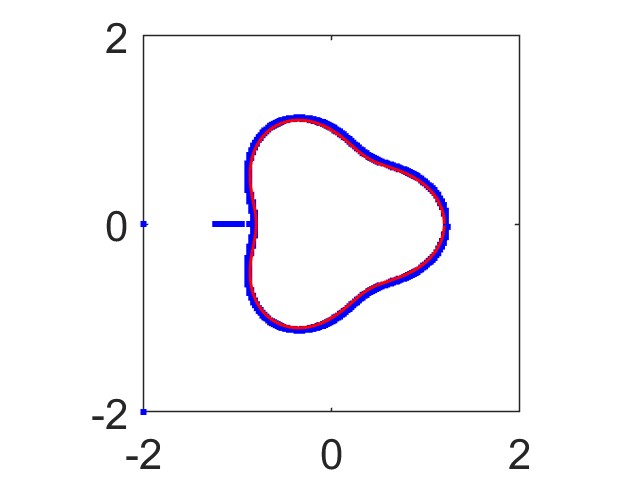}}
    \subfigure[$g=0.003$]{\includegraphics[width=0.32\linewidth]{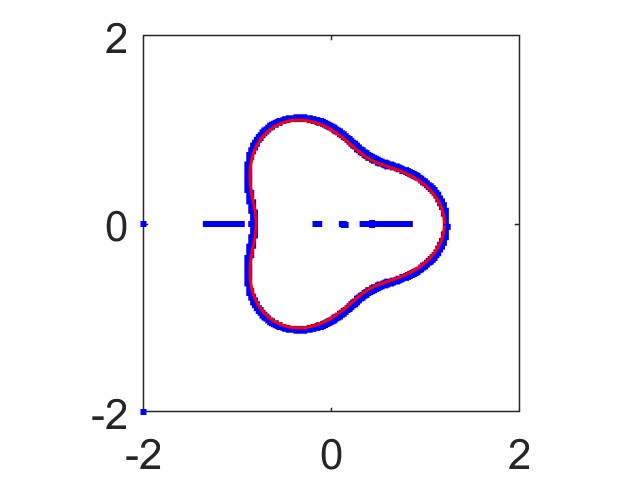}}
    \subfigure[$g=0.001$]{\includegraphics[width=0.32\linewidth]{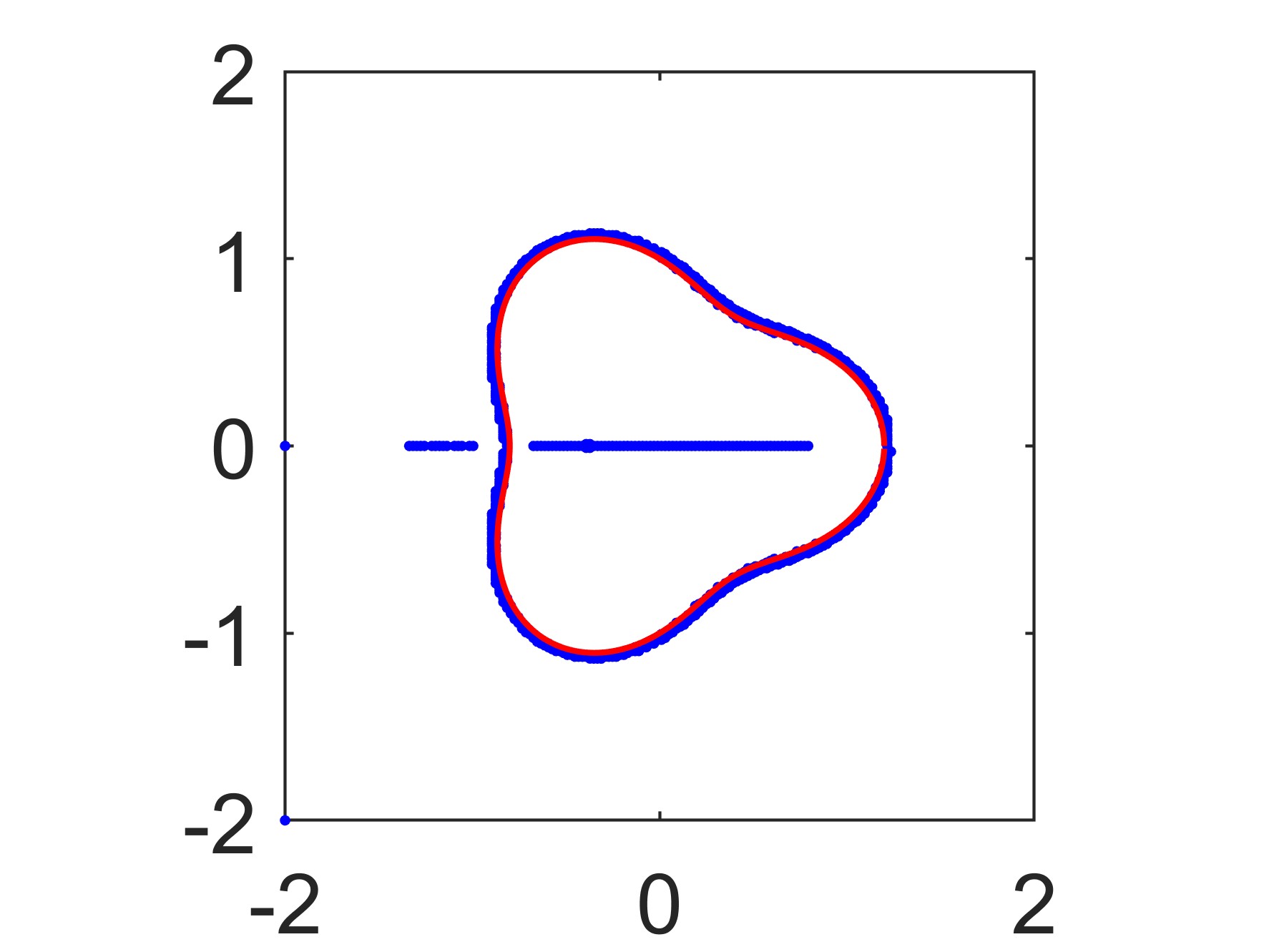}}
  \caption{Reconstruction of the pear for $k=25$ under separated domains. }\label{fig: pear1}
\end{figure}

\begin{example}
  As the last example, we aim to reconstruct the multiple obstacles from the scattered field. The three disjoint obstacles are respectively parameterized by 
  \begin{align*}
    \text{peanut:}&
    \quad x(t)=(-2,2)+\sqrt{3\cos^2t+1}\left(\cos \left(t+\frac{\pi}{4}\right),\sin \left(t+\frac{\pi}{4}\right)\right),\quad t\in[0,2\pi).\\
    \text{kite:}&\quad x(t)=(\cos t+0.65\cos2t-0.65,1.5\sin t-3),\quad t\in[0,2\pi).\\
    \text{pear:}&\quad x(t)=(3,2)+(1+0.15\cos3t)(\cos t,\sin t),\quad t\in[0,2\pi).
  \end{align*}
\end{example}

We plot the exact scatterers in \Cref{fig: model}. The parameters are taken to be $N_d=1024, N_R=512.$ The radius of the observation circle is chosen to be $10.$ By taking different $g,$ we display the reconstructions in \Cref{fig: multi}. These results show that all the disconnected components can be recovered. For the cases $g=0.05$ and $g=0.1$, the part located in the illuminated region can be better recovered while the ability to reconstruct the boundaries that are inadequately illuminated is limited.

\begin{figure}
  \centering
  \includegraphics[width=0.5\linewidth]{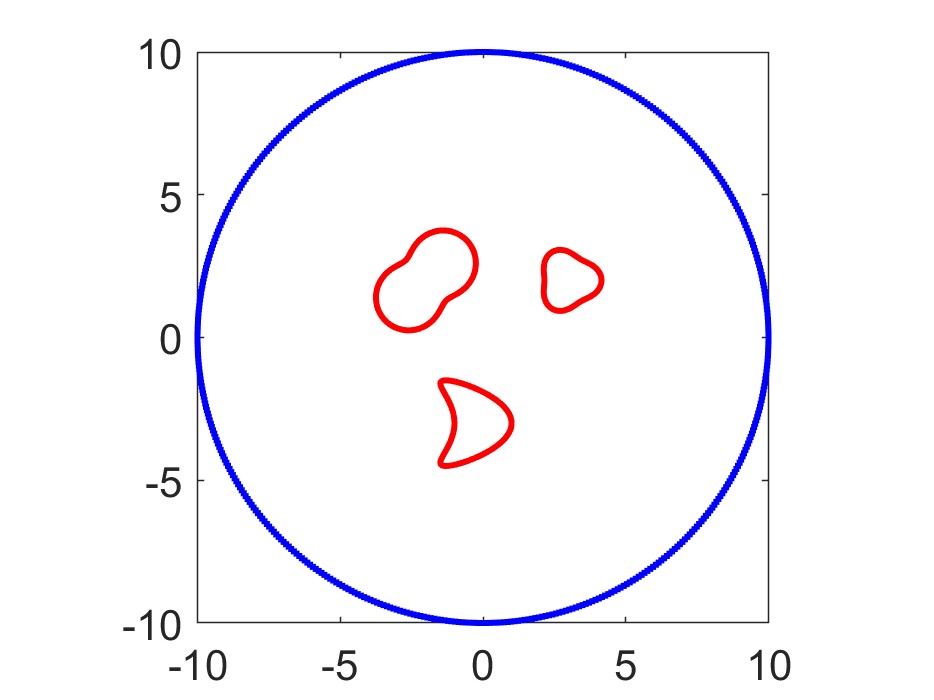}
  \caption{Exact scatterer that has three disjoint components.}\label{fig: model}
\end{figure}

\begin{figure}
  \centering
  \subfigure[$g=5\times10^{-4}$]{\includegraphics[width=0.3\linewidth]{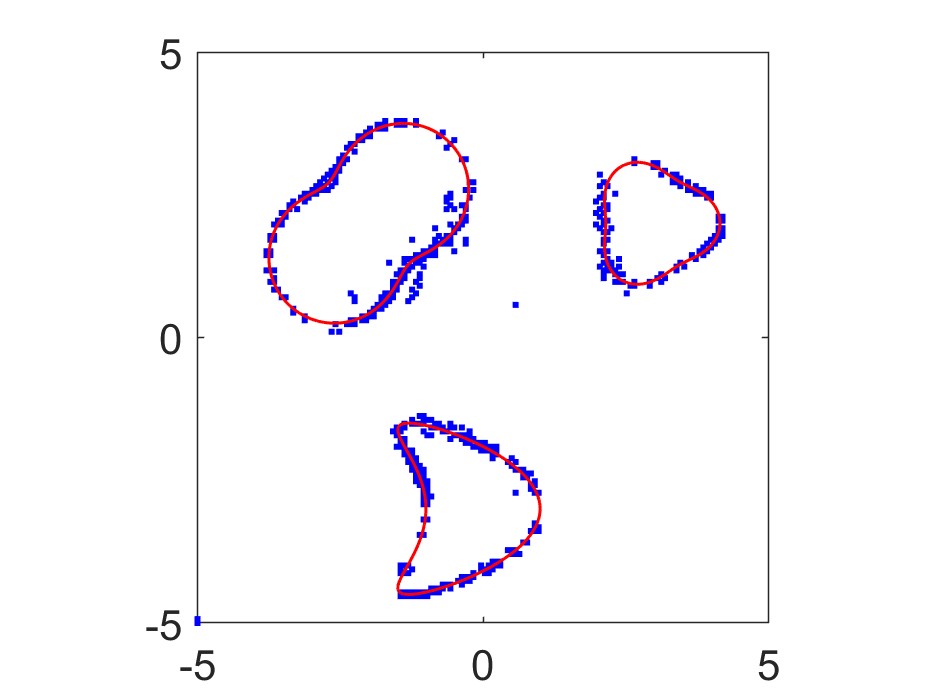}} 
  \subfigure[$g=0.005$]{\includegraphics[width=0.3\linewidth]{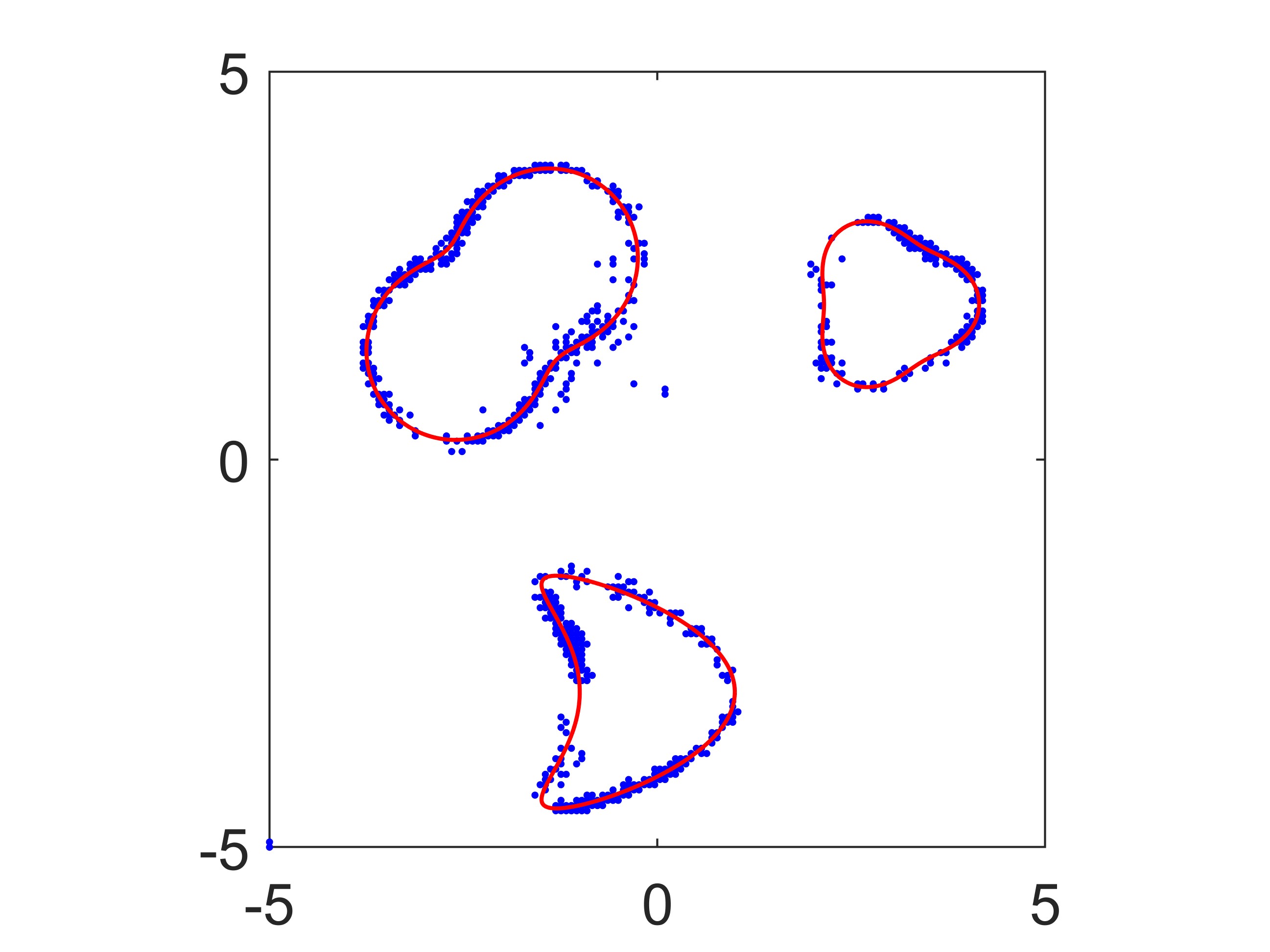}} 
  \subfigure[$g=0.05$]{\includegraphics[width=0.3\linewidth]{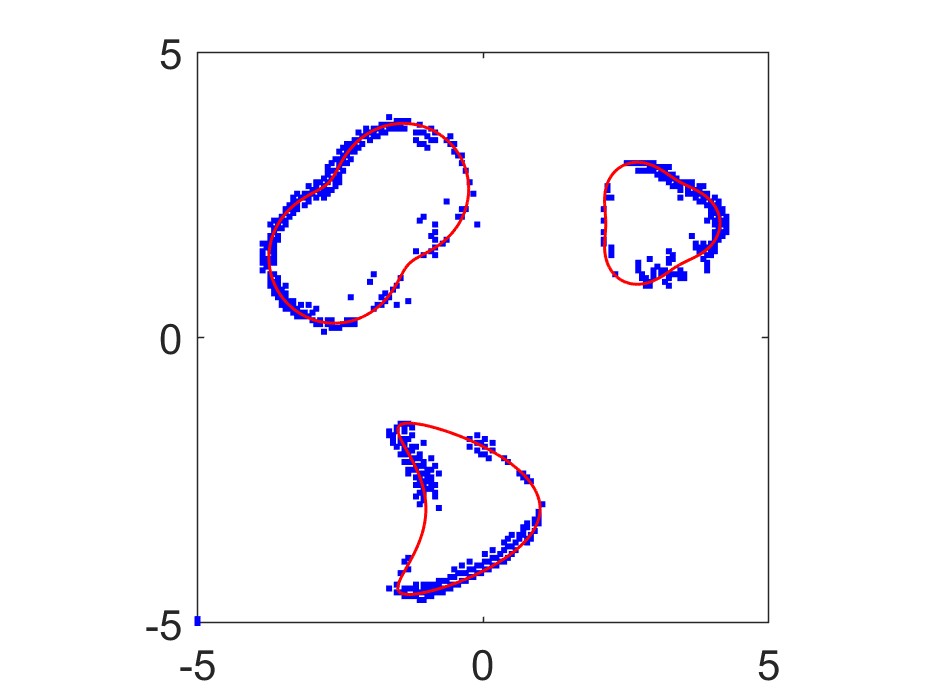}} 
  \caption{Recover the scatterer that has three disjoint components.}\label{fig: multi}
\end{figure}

\section{Conclusion}\label{sec: conclusion}

In this work, we propose an effective computational scheme for solving the inverse acoustic scattering problems. By using the tapered wave with a very narrow width illuminating the obstacle, the local boundary of the obstacle inside the tapered wave is reconstructed by a direct imaging algorithm. Then the shape of the target obstacle can be recovered by changing the incident direction of the tapered wave. Theoretical analysis is given to justify the rationale and simulation experiments are conducted to validate its applicability.

 Concerning future work, feasible extensions include the application of the imaging method to three-dimensional problems or more complicated physical configurations. Further extensions to the scenarios of electromagnetic or elastic waves are also interesting topics.


\end{document}